\def\R {\mathbb{R}}
\def\C {\mathcal{C}}
\def\N {\mathbb{N}}
\def\S {\mathbb{S}}
\def\eps{\varepsilon}
\newcommand{\loc}{\mathrm{loc}}
\newcommand{\Lip}{\mathrm{Lip}}
\newcommand{\wc}{\rightharpoonup}
\newcommand{\de}[1] {\mathrm{d} #1}
\newcommand{\pa}{\partial}
\newcommand{\mf}[1]{\mathbf{#1}}
\newcommand{\mbfu}{\mathbf{u}}
\newcommand{\mbfv}{\mathbf{v}}
\newtheorem{proposition}{Proposition}[section]
\newtheorem{theorem}[proposition]{Theorem}
\newtheorem*{theorem*}{Theorem}
\newtheorem{corollary}[proposition]{Corollary}
\newtheorem{lemma}[proposition]{Lemma}
\theoremstyle{definition}
\newtheorem{definition}[proposition]{Definition}
\newtheorem{remark}[proposition]{Remark}
\numberwithin{equation}{section}
\title[Multidimensional entire solutions]{Multidimensional entire solutions \\ for an elliptic system modelling phase separation}
\author{Nicola Soave and Alessandro Zilio}
\subjclass[2010]{35B08, 35B06, 35B53 (Primary) 35B40, 35J47}
\keywords{Nonlinear Schr\"odinger systems, Entire solutions of elliptic systems, Liouville-type theorem, Monotonicity formula, Optimal partition problems}
\address{
\hbox{\parbox{5.7in}{\medskip\noindent
 Nicola Soave\\
Mathematisches Institut, Justus-Liebig-Universit\"at Giessen, \\
Arndtstrasse 2, 35392 Giessen (Germany),\\[2pt]
{\em{E-mail address: }}{\tt nicola.soave@gmail.com, nicola.soave@math.uni-giessen.de.}\\[5pt]
Alessandro Zilio\\
Centre d'analyse et de math\'{e}matique sociales\\
\'{E}cole des Hautes \'{E}tudes en Sciences Sociales\\
190-198 Avenue de France, 75244, Paris CEDEX 13 (France) \\
{\em{E-mail address: }}{\tt azilio@ehess.fr, alessandro.zilio@polimi.it.}}}}
\begin{document}
\begin{abstract}
For the system of semilinear elliptic equations
\[
	\Delta V_i = V_i \sum_{j \neq i} V_j^2, \qquad V_i > 0	 \qquad \text{in $\R^N$}
\]
we devise a new method to construct entire solutions. The method 
extends the existence results already available in the literature, which are concerned with the 2-dimensional case, also 
in higher dimensions $N \ge 3$. In particular, we provide an explicit relation between orthogonal symmetry subgroups, optimal partition problems of the sphere, the existence of solutions and their asymptotic growth. This is achieved by means of new asymptotic estimates for competing system and new sharp versions for monotonicity formulae of Alt-Caffarelli-Friedman type.
\end{abstract}

\maketitle

\section{Introduction}

The elliptic systems
\begin{equation}\label{entire system}
\begin{cases}
\Delta V_i = V_i \sum_{j \neq i} V_j^2 \\
V_i \ge 0
\end{cases} \qquad \text{in $\R^N$, $i=1,\dots,k$},
\end{equation}
which arise in the blow-up analysis of phase-separation phenomena in coupled Schr\"odinger equations, has attracted an increasing attention in the last years, and by now many results concerning existence and qualitative properties of the solutions are available. For the detailed explanation about how \eqref{entire system} appears, we refer to \cite{BeLiWeZh,BeTeWaWe,SoZi2}. In this paper we prove the existence of \emph{$N$-dimensional solutions} to \eqref{entire system} in $\R^N$ for any $N \ge 2$. With this, we mean that we construct solutions in $\R^N$ which cannot be obtained from solutions in lower dimension by adding the dependence on some ``mute" variable. Our results extend the construction developed in \cite{BeTeWaWe}, which concerns the planar case $N=2$. In this perspective, we mention that previous results contained in \cite{BeLiWeZh,BeTeWaWe} only regard the existence of solutions in dimension $N=1$ or $2$, and the question of the existence in higher dimension was up to now open.

In order to state our main results, we introduce some notation. We denote by $\mathcal{O}(N)$ the orthogonal group of $\R^N$, and by $\mathfrak{S}_k$ the symmetric group of permutations of $\{1,\dots,k\}$. Let us assume that there exists a homomorphism $h: \mathcal{G} \to \mathfrak{S}_k$, where $\mathcal{G}< \mathcal{O}(N)$ is a nontrivial subgroup. We define the \emph{equivariant right action} of $\mathcal{G}$ on $H^1(\R^N,\R^k)$ in the following way:
\begin{equation}\label{equiv action}
\begin{split}
\mathcal{G} \times H^1(\R^N,\R^k) &\to  H^1(\R^N,\R^k) \\
(g,\mf{u}) &\mapsto g \cdot \mf{u}:= \left( u_{(h(g))^{-1}(1)} \circ g,\dots,u_{(h(g))^{-1}(k)} \circ g \right)
\end{split}
\end{equation}
where $\circ$ denotes the usual composition of functions, and we used the vector notation $\mf{u}:=(u_1,\dots,u_k)$. The set
\[
H_{(\mathcal{G},h)}:= \left\{ \mf{u} \in H^1(\R^{N},\R^k): \mf{u} = g \cdot \mf{u} \ \forall g \in \mathcal{G} \right\}
\]
is the subspace of the $(\mathcal{G},h)$-equivariant functions.

\begin{definition}\label{def: admissible pair}
For $k \in \N$, a nontrivial subgroup $\mathcal{G} < \mathcal{O}(N)$, and a homomorphism $h: \mathcal{G} \to \mathfrak{S}_k$, we write that the triplet $(k,\mathcal{G},h)$ is admissible if there exists a $(\mathcal{G},h)$-equivariant function $\mf{u}$ with the following properties: 
\begin{itemize}
\item[($i$)] $u_i \ge 0$ and $u_i \not \equiv 0$ for every $i$;
\item[($ii$)] $u_i  u_j \equiv 0$ for every $i \neq j$;
\item[($iii$)] there exist $g_2,\dots,g_k \in \mathcal{G}$ such that
\[
u_2 = u_1 \circ g_2, \quad u_3 = u_1 \circ g_3, \quad \dots \quad u_k= u_1 \circ g_k. 
\]
\end{itemize}
\end{definition}
\begin{remark}\label{rem: ne basta una}
Notice that, if $(k,\mathcal{G},h)$ is admissible triplet, then all the $(\mathcal{G},h)$-equivariant functions satisfy ($iii$) in the previous definition with the same symmetries $g_i$: indeed, by ($iii$) and equivariance we deduce that $(h(g_i))^{-1}(i) = 1$ for every $i$, so that any equivariant function satisfies
\begin{equation}\label{ne basta una eq}
v_i = v_{(h(g_i))^{-1}(i)} \circ g_i = v_1 \circ g_i, \qquad \forall i=1,\dots,k.
\end{equation}
This tells us that any equivariant function associated to an admissible triplet is completely determined by its first component: if we know that $\mf{v}$ is $(\mathcal{G},h)$-equivariant and that $(k,\mathcal{G},h)$ is an admissible triplet, then \eqref{ne basta una eq} holds true, and hence $v_2,\dots,v_k$ can be obtained by knowing $v_1$ and $g_2,\dots,g_k$.

We also underline the fact that there may exist symmetries in $\mathcal{G}$ whose corresponding permutation is the identity. In this case, these symmetries are imposed on the single components.

Finally, we observe that the definition of admissible triplet implicitly imposes several restrictions on $(k,\mathcal{G},h)$. For instance, by ($iii$) we immediately deduce that $h$ can never be the trivial homomorphism $g \in \mathcal{G} \mapsto id \in \mathfrak{S}_k$ for all $g$. Moreover, we also deduce that $\mathcal{G}$ has at least $k$ different elements.
\end{remark}
Let $(k,\mathcal{G},h)$ be an admissible triplet. We denote by 
\begin{equation}\label{def equiv functions on S}
\Lambda_{(\mathcal{G},h)}:= \left\{ \varphi \in H^1(\S^{N-1},\R^k)\left|\begin{array}{l} \text{$\varphi$ is the restriction on $\mathbb{S}^{N-1}$ of a }\\ \text{$(\mathcal{G},h)$-equivariant function fulfilling} \\
\text{($i$)-($iii$) in Definition \ref{def: admissible pair}} \end{array}\right.\right\}.
\end{equation}
We consider the minimization problem
\begin{equation}\label{def equivariant optimal value}
\ell_{(k,\mathcal{G},h)}:= \inf_{ \varphi \in \Lambda_{(\mathcal{G},h)} } \frac{1}{k}\sum_{i=1}^k \left( \sqrt{\left(\frac{N-2}{2}\right)^2 + \frac{\int_{\S^{n-1}} |\nabla_\theta \varphi_i|^2 }{\int_{\S^{n-1}} \varphi_i^2}} - \frac{N-2}{2}\right),
\end{equation}
where $\nabla_\theta$ denotes the tangential gradient on $\mathbb{S}^{N-1}$.

\begin{theorem}\label{thm: new existence}
For any admissible pair $(k, \mathcal{G},h)$, there exists a solution $\mf{V}$ of \eqref{entire system} with $k$ components in $\R^N$ satisfying the following properties:
\begin{itemize}
\item $\mf{V}$ is $(\mathcal{G},h)$-equivariant;
\item it results
\begin{equation}\label{growth rate strong}
\lim_{r \to +\infty} \frac{1}{r^{N-1+ 2\ell_{(k,\mathcal{G},h)}}} \int_{\pa B_r} \sum_{i=1}^k V_i^2 \in (0,+\infty).
\end{equation}
\end{itemize}
\end{theorem}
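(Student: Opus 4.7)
The plan is to follow the approximation-by-balls strategy of Berestycki--Terracini--Wang--Wei, replacing their two-dimensional angular analysis with the higher-dimensional variational exponent $\ell_{(k,\mathcal{G},h)}$ coming from the optimal partition problem \eqref{def equivariant optimal value} on $\mathbb{S}^{N-1}$. As a preliminary step, I would show that $\Lambda_{(\mathcal{G},h)}$ is non-empty (this is exactly admissibility of the triplet) and that the infimum $\ell_{(k,\mathcal{G},h)}$ is attained by a minimizer $\varphi$: this is a direct-method argument in the equivariant class using the weak $H^1$ lower semicontinuity of the Rayleigh-type functional and the fact that the constraints in Definition~\ref{def: admissible pair} are closed under weak convergence when combined with the equivariant symmetry (which prevents concentration on a single component).

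For each $R>0$, I would minimize the natural energy
\[
E_R(\mf{u}) = \int_{B_R}\sum_{i=1}^k \Bigl(\tfrac12|\nabla u_i|^2 + \tfrac14 \sum_{j\ne i} u_i^2 u_j^2\Bigr)
\]
on the class of $(\mathcal{G},h)$-equivariant competitors with $u_i\ge 0$ and boundary trace $R^{\ell_{(k,\mathcal{G},h)}}\,\varphi(x/|x|)$ on $\pa B_R$. A direct-method argument yields a minimizer $\mf{V}_R$, and by the principle of symmetric criticality (applicable since the equivariant action is isometric) $\mf{V}_R$ solves \eqref{entire system} in $B_R$. The next and central step is to obtain a uniform-in-$R$ two-sided bound
\[
c\, r^{N-1+2\ell_{(k,\mathcal{G},h)}} \;\le\; \int_{\pa B_r}\sum_i V_{R,i}^2 \;\le\; C\, r^{N-1+2\ell_{(k,\mathcal{G},h)}}, \qquad 1\le r\le R,
\]
with $c,C>0$ independent of $R$. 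The upper bound would be extracted from the new sharp Alt--Caffarelli--Friedman type monotonicity formula announced in the abstract, specialized to the equivariant setting so that its characteristic exponent is precisely $\ell_{(k,\mathcal{G},h)}$; the lower bound would follow from the explicit boundary datum and the minimality of $\mf{V}_R$ (a competitor constructed from $R^{\ell}\varphi(x/|x|)$ truncated on a smaller ball provides energy control that transfers to the boundary mass).

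Passing to the limit along $R_n\to\infty$ with standard elliptic regularity gives a $C^2_{\loc}$ limit $\mf{V}$ which is $(\mathcal{G},h)$-equivariant (the equivariance class is closed under $C^1_{\loc}$ convergence), solves \eqref{entire system} in $\R^N$, inherits the two-sided bound, and is therefore nontrivial; by Definition~\ref{def: admissible pair}(iii), the nontriviality of one component transfers to all components via the symmetries $g_i$. To upgrade the two-sided bound to the sharp asymptotic \eqref{growth rate strong}, I would run an Almgren-type blow-down on $\mf{V}$: the family $\mf{V}(R\,\cdot)/R^{\ell_{(k,\mathcal{G},h)}}$ is precompact, its blow-down limits are homogeneous solutions of degree $\ell_{(k,\mathcal{G},h)}$ whose angular profiles minimize \eqref{def equivariant optimal value}, and the monotonicity of the Almgren quotient, combined with the equality case of the sharp ACF formula, forces the full monotone convergence of the quantity $r^{-(N-1+2\ell_{(k,\mathcal{G},h)})}\int_{\pa B_r}\sum_i V_i^2$ to a finite, strictly positive limit.

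\textbf{Main obstacle.} The crucial difficulty is designing the ACF-type monotone quantity so that its characteristic exponent is exactly $\ell_{(k,\mathcal{G},h)}$. In dimension $N=2$ this exponent admits an explicit trigonometric expression, which makes the comparison with classical ACF immediate; for $N\ge 3$ the exponent is only variationally characterized through \eqref{def equivariant optimal value}, so the monotone quantity must be tailored to see the equivariant optimal partition of $\mathbb{S}^{N-1}$. A careful analysis of the equality case (classifying homogeneous minimizers in the equivariant class) will then be needed to guarantee that the blow-down limits are nontrivial and that the matching upper and lower bounds collapse to a single asymptotic constant.
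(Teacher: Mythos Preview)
Your overall scheme is close to the paper's, but there is a genuine gap at the compactness step, and the roles you assign to the two monotonicity formulae are reversed. With boundary data prescribed on $\partial B_R$, the Almgren inequality $N(\mathbf{V}_R,r)\le \ell$ (which follows from minimality against the $\ell$-homogeneous extension) makes $r\mapsto H(\mathbf{V}_R,r)/r^{2\ell}$ essentially \emph{non-increasing}; knowing its value at $r=R$ therefore yields only the \emph{lower} bound $H(\mathbf{V}_R,r)/r^{2\ell}\ge c$, not the upper bound you need for local $L^\infty$ control and hence for compactness. Your claim that the equivariant ACF formula supplies this upper bound is not correct as stated: the ACF quantity $r^{-2k\ell}\prod_i J_i(r)$ is increasing, so even after using equivariance ($J_i\equiv J_1$) you would need a uniform bound on $J_1(R)/R^{2\ell}$, which involves the \emph{weighted} integral $\int_{B_R}|x|^{-(N-2)}(|\nabla V_{R,1}|^2+\dots)$ and is not controlled by the unweighted energy bound coming from minimality; and even granting that, passing from a bound on $J_1(r)$ to one on $H(r)$ is not immediate. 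This is precisely the difficulty the paper flags in Remark~\ref{rem: differenza noi loro}: the BTWW normalization at $\partial B_R$ requires extra two-dimensional tools that do not extend to $N\ge 3$.

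The paper resolves this by changing the normalization. One minimizes on the \emph{unit} ball with boundary data $\varphi$ and coupling $\beta$, obtaining $\mathbf{U}_\beta$ with $\mathcal{E}_\beta(\mathbf{U}_\beta)\le \ell$; then one rescales $V_{i,\beta}(x)=\beta^{1/2}r_\beta\,U_{i,\beta}(r_\beta x)$ with $r_\beta$ chosen so that $H(\mathbf{V}_\beta,1)=1$. This fixes the mass at the \emph{inner} scale, and now Almgren immediately gives the doubling bound $H(\mathbf{V}_\beta,r)/r^{2\ell}\le e^{\ell}$ for $r\ge 1$, uniformly in $\beta$, which is exactly the missing upper bound and yields $\mathcal{C}^2_{\loc}$ compactness. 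The ACF formula enters only at the very end, and for the \emph{opposite} inequality: once the entire equivariant solution $\mathbf{V}$ with $N(\mathbf{V},r)\le\ell$ is constructed, $H(\mathbf{V},r)/r^{2\ell}$ is non-increasing (so its limit is finite), and the equivariant ACF formula gives a uniform lower bound on $r^{-2k\ell}\prod_i J_i(r)$, which via $E+H\ge C\bigl(\prod_i J_i\bigr)^{1/k}$ forces $\lim_{r\to\infty}H(\mathbf{V},r)/r^{2\ell}>0$. Thus your ``main obstacle'' is correctly identified, but the fix is a change of normalization rather than an ACF-based upper bound.
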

Here and in the rest of the paper $B_r(x_0)$ denotes the ball of center $x_0$ and radius $r$; in case $x_0=0$, we simply write $B_r$ for the sake of simplicity.

Since the theorem is quite general, we think that it is worth to spend some time making some explicit examples. This will be done in Section \ref{sec: app}. 
For the moment, we anticipate that with our result we can both recover Theorem 1.3 and 1.6 in \cite{BeTeWaWe}, and moreover we can produce a wealth of new solutions existing only in dimension $N \ge 3$. 

We also observe that condition \eqref{growth rate strong} establishes that the solution $\mf{V}$ grows at infinity, in quadratic mean, like the power $|x|^{\ell_{(k,\mathcal{G},h)}}$. It is worth to remark that for any solution $\mf{V}$ to \eqref{entire system} it is possible to defined the \emph{growth rate} as the uniquely determined value $d \in (0,+\infty]$ such that
\[
\lim_{r \to +\infty} \frac{1}{r^{N-1+2m}} \int_{\pa B_r} \sum_{i=1}^k V_i^2 = \begin{cases} +\infty & \text{if $m <d$} \\
0  & \text{if $m>d$},\end{cases}
\]
see Proposition 1.5 in \cite{SoTe} and its proof. Therein, it is also shown that $\mf{V}$ \emph{has algebraic growth}, i.e. it satisfies the point-wise upper bound  
\begin{equation}\label{alg growth}
V_1(x)+ \cdots + V_k(x) \le C (1+|x|^\alpha) \qquad \forall x \in \R^N
\end{equation}
for some $C,\alpha \ge 1$, if and only if its growth rate $d$ is finite: we point out moreover that, as shown in \cite{SoZi1}, the system does indeed admit solutions with an exponential (i.e. non algebraic) growth.

 Theorem \ref{thm: new existence} not only specifies the growth rate of the function $(d = \ell(k,\mathcal{G},h))$, but also states that, for this precise growth rate, the limit 
 \[
 \lim_{r \to +\infty}  \frac{1}{r^{N-1+2d}} \int_{\pa B_r} \sum_{i=1}^k V_i^2
 \]
 is positive and finite. In this perspective we can prove that the solutions of Theorem \ref{thm: new existence} have minimal growth rate among all the possible $(\mathcal{G},h)$-equivariant solutions.

\begin{theorem}\label{prop: Liouville symm}
Let $(k, \mathcal{G},h)$ be an admissible pair, and let $\mf{V}$ be a $(\mathcal{G},h)$-equivariant solution of \eqref{entire system}. Then the growth rate of $\mf{V}$ is at least $\ell(k,\mathcal{G},h)$.
\end{theorem}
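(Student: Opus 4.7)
The strategy is to relate the growth rate of $\mathbf{V}$ to the characteristic exponent of a segregated blow-down profile, and then use equivariance to identify that profile with a competitor in the variational problem defining $\ell_{(k,\mathcal{G},h)}$. The case of infinite growth rate is trivial, so one may assume $d < +\infty$, i.e.\ that $\mathbf{V}$ has algebraic growth in the sense of \eqref{alg growth}.

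First I would exploit the Almgren-type frequency function
\[
N(r) := \frac{r \int_{B_r} \sum_i \bigl( |\nabla V_i|^2 + V_i^2 \tsum_{j \neq i} V_j^2 \bigr)}{\int_{\pa B_r} \sum_i V_i^2},
\]
which is nondecreasing for solutions of \eqref{entire system} (a standard consequence of the Pohozaev identity for this system). The growth rate $d$ is then characterized as $d = \lim_{r\to+\infty} N(r)$, by a now classical argument relating $N$ to the logarithmic derivative of $r^{1-N} \int_{\pa B_r}\sum_i V_i^2$. Next I would perform a blow-down: setting
\[
\rho(R)^2 := R^{1-N} \int_{\pa B_R} \tsum_i V_i^2, \qquad \mathbf{V}_R(x) := \rho(R)^{-1} \mathbf{V}(Rx),
\]
the normalization yields $\int_{\pa B_1}\sum_i V_{R,i}^2 \equiv 1$, and $\mathbf{V}_R$ solves \eqref{entire system} with coupling strength $R^{2+2d}$ (roughly) blowing up. Using the new asymptotic estimates for competing systems advertised in the abstract, one extracts a subsequential limit $\mathbf{W} \in H^1_\loc(\R^N,\R^k)$ (strongly, so that all relevant energies pass to the limit), which is segregated ($W_iW_j\equiv 0$ for $i\neq j$), satisfies $\Delta W_i = 0$ on $\{W_i>0\}$, and is nontrivial thanks to the sphere-normalization. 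Monotonicity of $N$ (combined with $N(r)\to d$) forces $\mathbf{W}$ to be $d$-homogeneous, so that $W_i(r\theta) = r^d\varphi_i(\theta)$ for some $\varphi_i \in H^1(\S^{N-1})$ with pairwise disjoint supports.

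Now equivariance passes to the limit: since $\mathcal{G} < \mathcal{O}(N)$ commutes with dilations, each $\mathbf{V}_R$ remains $(\mathcal{G},h)$-equivariant, so $\mathbf{W}$ is $(\mathcal{G},h)$-equivariant; by Remark~\ref{rem: ne basta una} the identities $W_i = W_1 \circ g_i$ follow automatically, and in particular every $W_i$ is nontrivial as soon as $W_1$ is, which is guaranteed by the $L^2$-normalization together with the pigeonhole across the orbit $\{g_i\}_{i=1}^k$. Hence $\varphi:=\mathbf{W}|_{\S^{N-1}} \in \Lambda_{(\mathcal{G},h)}$. Finally, since $r^d\varphi_i$ is harmonic on its positivity set, one obtains the eigenvalue relation
\[
\int_{\S^{N-1}}|\nabla_\theta \varphi_i|^2 = d(d+N-2) \int_{\S^{N-1}}\varphi_i^2 \qquad \text{for every } i=1,\dots,k,
\]
which, once inverted, gives exactly $d$ for each summand in the functional of \eqref{def equivariant optimal value} evaluated at $\varphi$. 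Thus the average equals $d$, and taking the infimum yields $d \ge \ell_{(k,\mathcal{G},h)}$.

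The main obstacle is the blow-down step: one must upgrade weak to strong $H^1_\loc$ convergence of $\{\mathbf{V}_R\}$ so that the segregation, homogeneity, and eigenvalue relation all transfer to $\mathbf{W}$, and one must rule out degeneration of $\mathbf{W}$ to a solution with fewer than $k$ nontrivial components. This is precisely the content of the sharp asymptotic estimates for competing systems developed in the paper; the equivariant pigeonhole above is what ensures that nontriviality of a single component propagates to all of them and therefore forces $\varphi \in \Lambda_{(\mathcal{G},h)}$ rather than merely into a larger admissible class.
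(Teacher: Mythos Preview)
Your proposal is correct and follows essentially the same route as the paper: blow down to a segregated $d$-homogeneous $(\mathcal{G},h)$-equivariant profile $\mathbf{W}$, identify its trace on $\S^{N-1}$ as a competitor for $\ell_{(k,\mathcal{G},h)}$ with functional value exactly $d$ via the eigenvalue relation, and conclude $d\ge\ell_{(k,\mathcal{G},h)}$. The only differences are cosmetic---the paper phrases the argument by contradiction, and the blow-down compactness is quoted from \cite{SoTe} (Theorem~1.4 there) rather than being one of the ``new asymptotic estimates'' of this paper.
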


Both the proofs of Theorems \ref{thm: new existence} and \ref{prop: Liouville symm} exploit the hidden relationship between the elliptic system \eqref{entire system} and optimal partition problems of type \eqref{def equivariant optimal value}. This relationship arises for instance by means of the validity of the following modification of the celebrated Alt-Caffarelli-Friedman monotonicity formula, tailor made for the study of $(\mathcal{G},h)$-equivariant solutions. 

For $\mf{V} \in H^1(\R^N,\R^k)$ and $i=1,\dots,k$ we define
\[
J_i(r):= \int_{B_r} \frac{|\nabla V_i|^2 + V_i^2 \sum_{j \neq i} V_j^2}{|x|^{N-2}}.
\]

\begin{proposition}\label{prop: acf equiv}
Let $(k,\mathcal{G},h)$ be an admissible triplet. There exists a constant $C>0$ depending only on $N$ and on $(k,\mathcal{G},h)$ such that, for any  $(\mathcal{G},h)$-equivariant solution $\mf{V}$ of \eqref{entire system}, the function
\[
r \mapsto \frac{1}{r^{2k \ell(k,\mathcal{G},h)}} e^{-Cr^{-1/2}} J_1(r) \cdots J_k(r)
\]
is monotone non-decreasing for $r > 1$; we recall that $\ell(k,\mathcal{G},h)$ has been defined in \eqref{def equivariant optimal value}. 
\end{proposition}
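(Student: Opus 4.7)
The plan is to derive the monotonicity by differentiating $\log(J_1\cdots J_k)$ and bounding it below via the ACF-type scheme for strongly competing systems (as in Conti--Terracini--Verzini), with the crucial refinement that the $(\mathcal{G},h)$-equivariance permits replacing the elementary exponent $2k$ by the sharp exponent $2k\,\ell(k,\mathcal{G},h)$.

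\emph{Step 1: ACF-type differential inequality for a single $J_i$.} For each index $i$, I would multiply $\Delta V_i = V_i\sum_{j\neq i} V_j^2$ by $V_i|x|^{2-N}$, integrate on $B_r$, and apply the Rellich identity associated to the singular weight $|x|^{2-N}$. Combining this with the radial/tangential splitting $|\nabla V_i|^2 = (\partial_r V_i)^2 + |x|^{-2}|\nabla_\theta V_i|^2$ on $\partial B_r$ and a sharp Cauchy--Schwarz estimate of $\int_{\partial B_r} V_i \partial_\nu V_i$, one obtains an inequality of the shape
\[
J_i(r) \,\le\, \frac{r\, J_i'(r)}{2\,\gamma(\Lambda_i(r))} + \mathcal{E}_i(r),\qquad \gamma(t):=\sqrt{\left(\tfrac{N-2}{2}\right)^2+t}-\tfrac{N-2}{2},
\]
where $\Lambda_i(r):=\int_{\partial B_r}|\nabla_\theta V_i|^2\big/\int_{\partial B_r} V_i^2$ is the trace Rayleigh quotient and $\mathcal{E}_i(r)$ is an interaction remainder collecting the terms $V_i^2 V_j^2$ that do not fit the divergence structure.

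\emph{Step 2: Equivariance and link with $\ell(k,\mathcal{G},h)$.} By Remark \ref{rem: ne basta una}, $V_i = V_1\circ g_i$ with $g_i\in\mathcal{O}(N)$, so each $g_i$ preserves $\partial B_r$ and the $L^2$ and $H^1$ norms on it; hence $\Lambda_i(r)=\Lambda_1(r)$. Moreover, $\varphi:=\mf{V}|_{\partial B_r}$ (suitably rescaled) is $(\mathcal{G},h)$-equivariant. To bring in $\ell(k,\mathcal{G},h)$, one must connect $\gamma(\Lambda_i(r))$ — the Rayleigh quotient of the non-disjointly-supported trace — to the optimization over $\Lambda_{(\mathcal{G},h)}$, which instead requires $\varphi_i\varphi_j\equiv 0$. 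This is done as in Conti--Terracini--Verzini: $\Lambda_i(r)$ is compared with the first Dirichlet eigenvalue on an equivariant partition of $\mathbb{S}^{N-1}$ into "effective domains of prevalence" of the $V_i$'s, the mismatch being encoded precisely by the competition contribution present in $J_i$. By the very definition of $\ell(k,\mathcal{G},h)$ as an infimum over $\Lambda_{(\mathcal{G},h)}$, this delivers
\[
\frac{1}{k}\sum_{i=1}^k \gamma(\Lambda_i(r)) \ \ge\ \ell(k,\mathcal{G},h) - \eta(r),
\]
where $\eta(r)$ is controlled by the same interaction integrals appearing in $\mathcal{E}_i$.

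\emph{Step 3: Summation and exponential correction.} Summing the estimate of Step 1 over $i$, plugging in the bound from Step 2, and reorganizing yields
\[
\frac{d}{dr}\log\bigl(J_1(r)\cdots J_k(r)\bigr)\ \ge\ \frac{2k\,\ell(k,\mathcal{G},h)}{r} - R(r),
\]
with $R(r)$ a remainder built out of $\sum \mathcal{E}_i(r)/J_i(r)$ and $\eta(r)$. The key quantitative step is to estimate $R(r)\le C\,r^{-3/2}$ for $r\ge 1$, by interpolating the competition integrals $\int_{\partial B_r} V_i^2 V_j^2$ against $\int_{\partial B_r} V_i^2$ and using the algebraic upper bound \eqref{alg growth} (or a preliminary argument showing that equivariant solutions are forced to have algebraic growth). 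Since $r^{-3/2}$ is integrable at infinity, integrating the differential inequality from $r$ to $\infty$ converts the remainder into a multiplicative factor $e^{-C r^{-1/2}}$, yielding the monotonicity of $r\mapsto r^{-2k\ell(k,\mathcal{G},h)} e^{-Cr^{-1/2}} J_1(r)\cdots J_k(r)$ for $r>1$.

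\emph{Main obstacle.} The hard part is Step 2: bridging the gap between the Rayleigh quotient $\Lambda_i(r)$ of the trace (whose supports overlap, because of strictly positive $V_i$) and the optimal partition value $\ell(k,\mathcal{G},h)$ (defined over strictly disjointly-supported competitors), with an error that is compatible with the $r^{-3/2}$ budget of Step 3. This requires a sharp equivariant spherical inequality in which the competition term inside $J_i$ plays precisely the role of an "effective segregation" penalty — this is where the full strength of the admissibility of $(k,\mathcal{G},h)$ and the structure of $\Lambda_{(\mathcal{G},h)}$ enter.
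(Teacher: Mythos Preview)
Your overall scheme --- differentiate $\log(J_1\cdots J_k)$, bound from below by a spherical optimal value, and integrate a remainder of order $r^{-3/2}$ --- matches the paper's. But the mechanism you propose for the remainder is wrong, and this is the genuine gap. In Step~3 you want $R(r)\le C r^{-3/2}$ by ``interpolating the competition integrals $\int_{\partial B_r} V_i^2V_j^2$ against $\int_{\partial B_r} V_i^2$ and using the algebraic upper bound~\eqref{alg growth}.'' The proposition, however, is stated for \emph{any} equivariant solution, with no growth hypothesis; your parenthetical fallback (``a preliminary argument showing that equivariant solutions are forced to have algebraic growth'') is neither proved nor obviously true, and in any case would make the constant $C$ depend on the particular solution $\mf{V}$ rather than only on $N$ and $(k,\mathcal{G},h)$. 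Even granting algebraic growth, you give no reason why the trace of $\mf{V}$ on $\partial B_r$ --- which does not satisfy an elliptic equation on the sphere --- should enjoy the pointwise segregation estimate $\beta^{1/2}u_iu_j\le C$ that drives a $\beta^{-1/4}$-type rate.

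The paper avoids all of this by \emph{not} separating the competition term as an error. It keeps $\tfrac12 V_i^2\sum_{j\neq i}V_j^2$ inside the Rayleigh-type quotient, so that after rescaling the sum $\sum_i\gamma(\cdot)$ equals $k\,I_\beta$ evaluated at the (equivariant) trace, with $\beta=r^2$; by definition this is $\ge k\,\ell_{r^2}$. The whole quantitative burden is then shifted to a \emph{universal} spectral estimate on $\S^{N-1}$, namely Lemma~\ref{lem: opt approximated}: $\ell_\beta\ge \ell - C\beta^{-1/4}$. This is proved not for the trace of $\mf{V}$ but for the \emph{minimizer} $\mf{u}_\beta$ of $\ell_\beta$, which solves an Euler--Lagrange system on the sphere, is uniformly Lipschitz, and satisfies $\beta^{1/2}u_{i,\beta}u_{j,\beta}\le C$; from these one builds the segregated competitor $\hat u_{i,\beta}=(u_{i,\beta}-\sum_{j\neq i}u_{j,\beta})^+$ (still equivariant, by Lemma~\ref{lem: cappucci equivariant}) and compares energies. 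Plugging $\ell_{r^2}\ge \ell - Cr^{-1/2}$ into the logarithmic derivative gives the $r^{-3/2}$ remainder directly. In short: your Step~2 ``effective domains of prevalence'' idea is exactly right in spirit, but it must be carried out on the minimizer of the auxiliary problem $\ell_\beta$, not on the solution $\mf{V}$ itself, and this decoupling is what makes the constant universal and the proof work without any growth assumption.
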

The expert reader will have already recognized the similarity with the original Alt-Caffarelli-Friedman monotonicity formula, proved in \cite{ACF}; monotonicity formulae of Alt-Caffarelli-Friedman type for competing systems are key ingredients for the results in \cite{ctv, FaSo, NoTaTeVe, SoTe, SoZi, Wa}. The previous result is, up to our knowledge, the first example of a monotonicity formula under a symmetry constraint.


We review now the main known results regarding entire solutions of the system \eqref{entire system} which were already available, starting with the $k=2$ components system. The $1$-dimensional problem was studied in \cite{BeLiWeZh}, where it is proved that there exists a solution satisfying the symmetry property $V_2(x) = V_1(-x)$, the monotonicity condition $V_1'>0$ and $V_2'<0$ in $\R$, and having at most linear growth, in the sense that there exists $C>0$ such that
\[
V_1(x)+V_2(x) \le C(1+|x|) \qquad \forall x \in \R^N.
\] 
Up to translations, scaling, and exchange of the components, this is the unique solution in dimension $N=1$, see \cite[Theorem 1.1]{BeTeWaWe}. The linear growth is the minimal admissible growth for non-constant positive solutions of \eqref{entire system}. Indeed, in any dimension $N \ge 1$, if $(V_1,V_2)$ is a \emph{nonnegative} solution of \eqref{entire system} (which means that the condition $V_i >0$ is replaced by $V_i \ge 0$) and satisfies the sublinear growth condition
\[
V_1(x)+V_2(x) \le C(1+|x|^\alpha) \qquad \text{in $\R^N$}
\]
for some $\alpha \in (0,1)$ and $C>0$, then one between $V_1$ and $V_2$ is $0$, and the other has to be constant. This \emph{Liouville-type theorem} has been proved by B. Noris et al. in \cite[Propositions 2.6]{NoTaTeVe}.  

Differently from the problem in $\R$, in dimension $N = 2$, and hence in any dimension $N \ge 2$, system \eqref{entire system} with $k=2$ has infinitely many ``geometrically distinct" solutions, i.e. solutions which cannot be obtained one from the other by means of rigid motions, scalings, or exchange of the components, see \cite[Theorem 1.3]{BeTeWaWe} and \cite[Theorems 1.1 and 1.5]{SoZi1}. These solutions can be distinguished according to their growth rates and symmetry properties. In particular, in \cite{BeTeWaWe} the authors proved the existence of solutions having algebraic growth, while the results in \cite{SoZi1} concern solutions having exponential growth in $x$ and being periodic in $y$.

Regarding systems with several components, the aforementioned existence results admit analogue counterparts for any $k \ge 3$, see \cite[Theorem 1.6]{BeTeWaWe} and \cite[Theorem 1.8]{SoZi1}.

It is important to stress that the proofs in \cite{BeTeWaWe,SoZi1} use the fact that the problem is posed in dimension $N=2$, and apparently cannot be extended to higher dimension (see the forthcoming Remark \ref{rem: differenza noi loro} for a more detailed discussion).

In parallel to the existence results, great efforts have been devoted to the analysis of the $1$-dimensional symmetry of solutions under suitable assumptions; this, as explained in \cite{BeLiWeZh}, is inspired by some analogy in the derivation of \eqref{entire system} and of the Allen-Chan equation, for which symmetry results in the spirit of the celebrated De Giorgi's conjecture have been widely studied. In this context, we recall that assuming $k=2$ and $N=2$, A. Farina proved that if $(V_1,V_2)$ has algebraic growth and $\pa_2 V_1>0$ in $\R^2$, then $(V_1,V_2)$ is $1$-dimensional \cite{Fa}. In the higher dimensional case $N \ge 2$ with $k=2$, A. Farina and the first author proved a Gibbons-type conjecture for system \eqref{entire system}, see \cite{FaSo}. Furthermore, as product of the main results in \cite{Wa,Wa2}, K. Wang showed that any solution of \eqref{entire system} with $k=2$ having linear growth is $1$-dimensional. We mention also \cite[Theorem 1.8]{BeLiWeZh} and \cite[Theorem 1.12]{BeTeWaWe}, which are now included in the Wang's result.

As far as the $1$-dimensional symmetry for systems with $k >2$ is concerned, we refer to \cite[Theorem 1.3]{SoTe}, where the main results in \cite{FaSo,Wa,Wa2} are extended to systems with many components by means of improved Liouville-type theorems for multi-components systems, which put in relation the number of nontrivial components for a nonnegative solution of the first equation in \eqref{entire system} and its growth rate. In this perspective, Theorem \ref{prop: Liouville symm} is the counterpart of \cite[Theorem 1.7]{SoTe} in a $(\mathcal{G},h)$-equivariant setting. As a product of these two results, we can also derive the following corollary.

\begin{corollary}
For $k,N \in \N$, let 
\[
\mathcal{L}_k(\S^{N-1}) := \inf_{(\omega_1,\dots,\omega_k) \in \mathcal{P}_k} \sup_{i=1,\dots,k} \lambda_1(\omega_i),
\]
where $\mathcal{P}_k$ is the set of partitions of $\S^{N-1}$ in $k$ open disjoint and connected sets, and $\lambda_1$ denotes the first eigenvalue of the Laplace-Beltrami operator on $\S^{N-1}$. Let also $(k,\mathcal{G},h)$ be any admissible triplet, with $\mathcal{G}<\mathcal{O}(N)$. Then
\[
\mathcal{L}_k(\S^{N-1}) \le \ell(k,\mathcal{G},h).
\]
\end{corollary}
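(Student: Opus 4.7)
The plan is to combine Theorem \ref{thm: new existence} of this paper with the Liouville-type lower bound \cite[Theorem 1.7]{SoTe}. The corollary is essentially a one-line deduction once the two ingredients are in place, so most of the work has already been done in proving the two theorems; what remains is to match them up.

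First, I would invoke Theorem \ref{thm: new existence} applied to the admissible triplet $(k,\mathcal{G},h)$: this produces a $(\mathcal{G},h)$-equivariant solution $\mathbf{V}$ of \eqref{entire system} for which the limit in \eqref{growth rate strong} is finite and positive. In particular, in the sense of the growth rate recalled immediately after Theorem \ref{thm: new existence}, $\mathbf{V}$ has growth rate equal to $\ell(k,\mathcal{G},h)$. By condition $(i)$ of Definition \ref{def: admissible pair}, all $k$ components of $\mathbf{V}$ are nontrivial, so $\mathbf{V}$ qualifies as a genuine $k$-component solution of \eqref{entire system}.

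Next, I would apply \cite[Theorem 1.7]{SoTe} to this specific $\mathbf{V}$: that result furnishes a lower bound, in terms of $\mathcal{L}_k(\S^{N-1})$, on the growth rate of any solution of \eqref{entire system} with $k$ nontrivial components. Since $\mathbf{V}$ has growth rate $\ell(k,\mathcal{G},h)$, this yields
\[
\mathcal{L}_k(\S^{N-1}) \leq \ell(k,\mathcal{G},h),
\]
which is precisely the claim. Note that Theorem \ref{prop: Liouville symm} is the equivariant analogue of \cite[Theorem 1.7]{SoTe} and plays no direct role here; its role is rather to certify that the value $\ell(k,\mathcal{G},h)$ produced by Theorem \ref{thm: new existence} is sharp in the equivariant class.

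Since the argument is a direct combination of two available theorems, no serious technical obstacle is anticipated. The only point requiring care is bookkeeping of conventions: one has to check that the growth rate in the sense of \eqref{growth rate strong} coincides with the one assumed in \cite[Theorem 1.7]{SoTe}, and that admissibility indeed yields a solution with exactly (and not merely at most) $k$ nontrivial components, which is guaranteed by $(i)$ in Definition \ref{def: admissible pair}. Once these verifications are in place the inequality follows at once.
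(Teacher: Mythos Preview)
Your proposal is correct and matches the paper's intended argument: the paper does not spell out a proof but states the corollary as a consequence of \cite[Theorem 1.7]{SoTe} together with the results of this paper, and your combination of Theorem \ref{thm: new existence} (yielding a $k$-component solution with growth rate exactly $\ell(k,\mathcal{G},h)$) with \cite[Theorem 1.7]{SoTe} (bounding the growth rate of any $k$-component solution below by $\mathcal{L}_k(\S^{N-1})$) is precisely this. Your observation that Theorem \ref{prop: Liouville symm} enters only indirectly---it is used inside the proof of Theorem \ref{thm: new existence} to pin down the growth rate as exactly $\ell$ rather than merely $\le \ell$---is also accurate, and in fact for the corollary the weaker bound $\le \ell$ already suffices.
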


It is tempting to conjecture that equality holds for an appropriate choice of $(\mathcal{G},h)$, at least for some values of $k,N$. Indeed, in light of the known results in the literature, this is the case for $k=2$ and $k=3$, for every $N$. 
For $k=2$, the only (up to isometries) optimal partition for $\mathcal{L}_2(\S^{N-1})=1$ is the partition of the sphere in two equal spherical cups \cite{ACF}. This is clearly also an optimal partition for $\ell(2,\mathcal{G},h)$ if $\mathcal{G}$ is equal to the group generated by the reflection $T$ with respect to a hyperplane through the origin, and $h(T)$ is defined as the permutation exchanging the indices $1$ and $2$. In case $k=3$, an optimal partition for $\mathcal{L}_3(\S^{N-1})=3/2 (N-1/2)$ is the so-called $\mf{Y}$-partition (see \cite{HofHelTer, SoTe}) which is then optimal also for $\ell(3,\mathcal{G},h)$ if $\mathcal{G}$ is equal to the group generated by the rotation $R$ of angle $2\pi/3$ around the $x_N$ axis and $h(R)$ is the permutation mapping $1$ into $2$, $2$ into $3$ and $3$ into $1$.

\medskip

To conclude, we mention also the contribution \cite{WaWe}, where the authors considered the fractional analogue of \eqref{entire system}. Such problem exhibit new interesting phenomena with respect to the local case. Moreover, we observe that our results, as those in \cite{BeTeWaWe}, seem to be somehow connected with those in \cite{WeWethLincei}, which on the other hand concern finite energy decaying solutions of a different problem. 

\medskip

\textbf{Structure of the paper:} in Section \ref{sec: prel} we recall some known results needed for the rest of work, and which permits to show, in Subsection \ref{sec: app}, several concrete applications of Theorem \ref{thm: new existence}. Section \ref{sec: acf} is devoted to the proof of the equivariant Alt-Caffarelli-Friedman monotonicity formula, Proposition \ref{prop: acf equiv}; finally, in Section \ref{sec: main}, we give the proofs of the other main results, Theorem \ref{thm: new existence} and \ref{prop: Liouville symm}.

\section{Preliminaries and application of Theorem \ref{thm: new existence}}\label{sec: prel}

We introduce some notation and review some known results.
Let $\beta>0$, and let $\mf{U}$ be a solution to
\begin{equation}\label{syst beta in palla}
\begin{cases}
\Delta U_i = \beta U_i \sum_{j \neq i} U_j^2 & \text{in $B_R$} \\
U_i>0 & \text{in $B_R$}.
\end{cases}
\end{equation}
For $0<r<R$, we set
\[
\begin{split}
\bullet \quad & H(\mf{U},r):= \frac{1}{r^{N-1}} \int_{\partial B_r} \sum_{i=1}^k U_i^2\\
\bullet \quad & E(\mf{U},r):= \frac{1}{r^{N-2}} \int_{B_r} \sum_{i=1}^k |\nabla U_i|^2 + \beta\sum_{1 \le i<j\le k} U_i^2 U_j^2  \\
\bullet \quad & N(\mf{U},r):= \frac{E(\mf{U},r)}{H(\mf{U},r)} \quad \text{Almgren frequency function}.
\end{split} 
\]
Under the previous notation, by Proposition 5.2 in \cite{BeTeWaWe} it is known that $N(\mf{U},\cdot)$ is monotone non-decreasing for $0<r<R$,
\[
\frac{d}{dr}H(\mf{U},r)= \frac{2}{r} E(\mf{U},r) + \frac{2\beta}{r^{N-1}}\int_{B_r} \sum_{i<j} U_i^2 U_j^2 > 0,
\]
and for any such $r$
\begin{equation}\label{integrability del resto}
\int_{1}^r 2\beta\frac{\int_{B_s} \sum_{i<j} U_i^2 U_j^2}{s^{N-1} H(\mf{U},s)} \,ds \le N(\mf{U},r).
\end{equation}
The frequency function, also called Almgren's quotient, gives information about the behaviour of the solutions with respect to radial dilations. 
Indeed, the possibility of defining a growth rate for any solution to \eqref{entire system} is a direct consequence of the monotonicity of $N(\mf{V},\cdot)$. We recall that, as proved in \cite[Proposition 1.5]{SoTe}, for any solution $\mf{V}$ to \eqref{entire system} there exists a value $d \in (0,+\infty]$ such that 
\begin{equation}\label{growth rate}
\lim_{r \to +\infty} \frac{\frac{1}{r^{N-1}} \int_{\pa B_r} \sum_{i=1}^k V_i^2}{r^{2d'}} = \begin{cases} +\infty & \text{if $d'<d$} \\
0 & \text{if $d'>d$},
\end{cases}
\end{equation}
and $d <+\infty$ if and only if $\mf{V}$ has algebraic growth. We write that $d$ is the \emph{growth rate of $\mf{V}$}, and it is remarkable that 
\begin{equation}\label{growth rate = almgren}
d= \lim_{r \to +\infty} N(\mf{V},r),
\end{equation}
see again \cite[Proposition 1.5]{SoTe} (the result is stated in \cite{SoTe} for solutions with algebraic growth, but its proof works also without such assumption). Notice that on the left hand side of \eqref{growth rate} we have the quadratic average of $\mf{V}$ on spheres of increasing radius divided by a power of $r^2$: thus the name \emph{growth rate}.


In the previous discussion $\beta>0$ was fixed. Let us now consider a sequence of parameters $\beta \to +\infty$, and a corresponding sequence $\{\mf{U}_\beta\}$ of solutions to \eqref{syst beta in palla}. The asymptotic behaviour of the family $\{\mf{U}_\beta\}$ has been studied in a number of papers \cite{BeLiWeZh,DaWaZh,NoTaTeVe,SoZi,SoZi2,tt,WeiWeth}, and many results are available. We only recall that, if the sequence is bounded in $L^\infty(B_R)$, then it is in turn uniformly bounded in $\Lip(B_R)$, and hence up to a subsequence it converges to a limit $\mf{U}$ in $\mathcal{C}^{0,\alpha}(B_R)$ and in $H^1_{\loc}(B_R)$ (see \cite{SoZi,NoTaTeVe}). If $\mf{U} \not \equiv \mf{0}$, then $\mf{U}$ is Lipschitz continuous and $\{\mf{U}=\mf{0}\}$ has Hausdorff dimension $N-1$. Moreover, $H(\mf{U},r)$ is non-decreasing and is $\neq 0$ for every $r>0$  (see \cite{tt}).

An important application to this asymptotic theory stays in the possibility of defining blow-down limits of entire solutions to \eqref{entire system}. We recall part of \cite[Theorem 1.4]{BeTeWaWe} ($k=2$) and \cite[Theorem 1.4]{SoTe} ($k$ arbitrary). Let $\mf{V}$ be a solution to \eqref{entire system}, and for any $R>0$ let us define the \emph{blow-down family}
\[
\mf{V}_R(x):= \frac{1}{H(\mf{V},R)^{1/2}} \mf{V}(Rx).
\]
If $\mf{V}$ has algebraic growth, i.e. its growth rate $d = N(\mf{V},+\infty)$ is finite, then $\{\mf{V}_R\}$ converges, in $\mathcal{C}^{0,\alpha}_{\loc}(\R^N)$ and in $H^1_{\loc}(\R^N)$,  as $R \to +\infty$ and up to a subsequence, to a homogeneous vector valued function $\mathbf{V}_\infty$ with homogeneity degree $d$ and such that
\begin{itemize}
\item the components $V_{i,\infty}$ are nonnegative and with disjoint support: $V_{i,\infty} V_{j,\infty} \equiv 0$ for every $i \neq j$;
\item for any $i \neq j$, $V_{i,\infty}-V_{j,\infty}$ is harmonic in the interior of its support.
\end{itemize}
In case $k=2$, it results then that $(V_{1,\infty},V_{2,\infty})=(\Psi^+,\Psi^-)$, where $\Psi$ is a homogenous harmonic polynomial in $\R^N$, and hence necessarily $d$ is an integer number.

\subsection{A wealth of new solutions: applications of Theorem \ref{thm: new existence}}\label{sec: app}
We recalled that, for any $k \ge 2$, problem \eqref{entire system} has several solutions in $\R^2$. Clearly, these are also solutions in higher dimension, and up to now it was an open question whether or not there exist $N$-dimensional solutions of \eqref{entire system} in $\R^N$ with $N \ge 3$, i.e. solutions in $\R^N$ which cannot be obtained as solutions in $\R^{N-1}$ by adding the dependence of a variable. Theorem \ref{thm: new existence} gives a positive answer to these questions. In what follows we show how to use Theorem \ref{thm: new existence} as a recipe to construct entire solutions of \eqref{entire system}.

\medskip

\noindent \textbf{A concrete example in $\R^3$ for $k=2$.} To start with a very concrete example, we focus on problem \eqref{entire system} in $\R^3$ with $k=2$, and we examine the case where $\mathcal{G}$ is equal to the group of symmetries generated by the reflections $T_1,T_2,T_3$ with respect to the planes $\{x=0\}$, $\{y=0\}$, and $\{z=0\}$ respectively, and $h:\mathcal{G} \to \mathfrak{S}_k$ is defined on the generators of $\mathcal{G}$ by $h(T_i)=(1 \ 2)$ for every $i$. We used here the standard notation $(1 \ 2)$ to denote the cycle mapping $1$ in $2$, and $2$ in $1$. In order to check that this is an admissible triplet, we verify that
\[
(u_1,u_2) = \left( (xyz)^+, (xyz)^- \right)
\]
is a $(\mathcal{G},h)$-equivariant function satisfying ($i$)-($iii$) in Definition \ref{def: admissible pair}. For the equivariance, we explicitly observe that
\[
T_i \cdot (u_1,u_2) = (\text{see \eqref{equiv action}}) = (u_2 \circ T_i, u_1 \circ T_i) = (\text{def. $\mf{u}$}) = (u_1,u_2),
\]
for every $i$, and since $\mathcal{G}$ is generated by $T_1,T_2,T_3$, this is sufficient to conclude that $\mf{u}$ is $(\mathcal{G},h)$-equivariant. Points ($i$) and ($ii$) in Definition \ref{def: admissible pair} are staightforward, and ($iii$) is satisfied since $u_2 = u_1 \circ T_i$ for any $i$. As a consequence, by Theorem \ref{thm: new existence} there exists a $(\mathcal{G},h)$-equivariant solution $(V_1,V_2)$ of \eqref{entire system} in $\R^3$ with $k=2$, having growth rate equal to $\ell(k,\mathcal{G},h)= N(\mf{V},+\infty)$ (we recall that the growth rate is always equal to the limit at infinity of the Almgren frequency function, see \eqref{growth rate = almgren}). Since the symmetries of $\mathcal{G}$ involve the $3$ variables, this solution cannot be obtained by a $2$-dimensional solution adding the dependence of $1$-variable: $V_1-V_2$ is not constant since $\mf{V}$ has growth rate $\ell(2,\mathcal{G},h)>0$; moreover, thanks to the symmetries $T_1,T_2,T_3$, we have that the function $V_1-V_2$ vanishes on the set $\{x =0 \} \cup \{y = 0\} \cup \{z = 0\}$. Since the projection of this set on any two-dimensional subspace is equal to the entire subspace but $\mathbf{V}$ is non trivial, we immediately deduce that the solution can not be two dimensional.

In this particular case we can also explicitly compute $\ell(2,\mathcal{G},h)$, in the following way: by minimality
\[
\ell(2,\mathcal{G},h) \le \frac{1}{2}\left( \sqrt{\frac{1}{4} + \frac{\int_{\mathbb{S}^2} |\nabla_\theta(xyz)^+|^2}{\int_{\S^2} |(xyz)^+|^2}} -\frac{1}{2}\right) + \frac{1}{2} \left(\sqrt{\frac{1}{4} + \frac{\int_{\S^{2}} |\nabla_\theta(xyz)^-|^2}{\int_{\S^{2}} |(xyz)^-|^2}} -\frac{1}{2} \right),
\]
and the right hand side is equal to $3$: indeed, since $\Phi:= xyz$ is a homogeneous harmonic polynomial of degree $3$, its angular part $\Phi|_{\S^2}$ solves
\[
-\Delta_\theta \Phi|_{\S^2}= 12 \Phi|_{\S^2} \qquad \text{in $\S^2$},
\]
and this permits to carry on explicit computations. This means that $\Psi$ (the blow-down limit) is a homogeneous harmonic polynomial of degree $\ell(2,\mathcal{G},h) \le 3$. It is then necessary that $\Psi=\Phi=xyz$: to check this, we can simply consider all the homogeneous harmonic polynomials in $\R^3$ with degree $\le 3$, which are classified, and observe that the only one being $(\mathcal{G},h)$ equivariant is $\Phi$. As a consequence, the degree of homogeneity of $\Psi$ is $3=\ell(2,\mathcal{G},h)$.

\medskip

\noindent \textbf{General case in $\R^N$ with $k=2$.} The very same argument as before can be considered by taking any homogeneous harmonic polynomial $\Phi$ in $\R^N$ of degree $d \in \N$, with a nontrivial finite group of symmetry $\mathcal{G}$: with this we mean that there exists a group of symmetry with generators $T_1,\dots,T_m$ such that $\Phi^{\pm} \circ T_i = \Phi^{\mp}$. To any $T_i$ we associate the cycle $(1 \ 2)$. This induces a homomorphism $h: \mathcal{G} \to \mathfrak{S}_2$, and it is not difficult to check that $(2,\mathcal{G},h)$ is an admissible triplet. Indeed, by assumption the pair $(u_1,u_2)=(\Phi^+,\Phi^-)$ fulfills ($i$)-($iii$) in Definition \ref{def: admissible pair}, and is $(\mathcal{G},h)$-equivariant:  the equivariance follows by
\[
T_i \cdot (u_1,u_2) = (\text{see \eqref{equiv action}}) = (u_2 \circ T_i, u_1 \circ T_i) = (u_1,u_2)
\] 
for any $i$. Points ($i$) and ($ii$) in Definition \ref{def: admissible pair} are trivial, and ($iii$) is satisfied since $u_2=u_1 \circ T_i$ for any $i$ by assumption. If, as in the example above, the group $\mathcal{G}$ has been chosen from the beginning so that the symmetries of $\mathcal{G}$ involve all the $N$-variables, we obtain an $N$-dimensional solution to \eqref{entire system}.
Explicit cases where the previous argument is applicable are the following: 
\begin{itemize}
\item At first, we show how we can recover Theorem 1.3 in \cite{BeTeWaWe}. In dimension $N=2$, we take $\Phi_d(x,y):= \mathfrak{Re}((x+iy)^d)$, with $d \in \N$. Then $\Phi_d$ is symmetric, in the previous sense, with respect to the group of symmetry generated by the reflections $T_1,\dots,T_d$ with respect to its nodal lines: $\Phi_d^{\pm} \circ T_i = \Phi_d^{\mp}$. By the previous argument, we find $(\mathcal{G},h)$-equivariant solutions of the problem with growth rate $\ell(2,\mathcal{G},h)$, which clearly are $2$-dimensional. Reasoning as in pur first example, it is not difficult in this case to check that $\ell(2,\mathcal{G},h)=d$.
\item Secondly, we construct infinitely many new solutions in $\R^3$. We take $\Phi_d(x,y):= \mathfrak{Re}((x+iy)^d)z$, with $d \in \N$. Let $T_1,\dots,T_d$ denote the reflections with respect to the nodal planes of $\mathfrak{Re}((x+iy)^d)$, and let $T_z$ denote the reflection with respect to $\{z=0\}$. Then $\Phi_d^\pm \circ T_i = \Phi_d^{\mp}$, so that the general argument above is applicable, and hence we find a $(\mathcal{G},h)$-equivariant solution of \eqref{entire system} with growth rate $\ell(2,\mathcal{G},h)$. As in the first example, since the nodal set of $V_1-V_2$ has surjective projection on any $2$-dimensional subspace, $\mathbf{V}$ is necessarily $3$-dimensional. We can also check that $\ell(2,\mathcal{G},h)=d+1$. Being $(\Phi_d^+,\Phi_d^-)$ a $(\mathcal{G},h)$-equivariant function, we have 
\begin{align*}
\ell(2,\mathcal{G},h) & \le \frac{1}{2}\left(\sqrt{\frac{1}{4} +\frac{\int_{\mathbb{S}^2} |\nabla_\theta\Phi_d^+|^2}{\int_{\S^2} |\Phi_d^+|^2}} - \frac{1}{2}\right) \\
& \quad + \frac{1}{2} \left(\sqrt{\frac{1}{4}+\frac{\int_{\mathbb{S}^2} |\nabla_\theta\Phi_d^-|^2}{\int_{\S^2} |\Phi_d^-|^2}} - \frac{1}{2}\right).
\end{align*}
as in the previous example, we can prove that the right hand side is equal to $d+1$. On the other hand, using the blow-down theorem and explicitly observing that the only $(\mathcal{G},h)$-equivariant homogeneous harmonic polynomial in $\R^3$ with degree less than or equal to $d+1$ is $\Phi_d$, we conclude that $\ell(2,\mathcal{G},h)=d+1$.
\item We conclude with 
the observation that the previous constructions can be extended in any dimensions. For instance we can consider the harmonic polynomial $\Phi = x_1 \cdots x_N$, together with the symmetry group generated by the reflections $T_1,\dots,T_N$ with respect to the coordinate planes $\{x_i=0\}$, $i=1,\dots,N$; notice that $\Phi^\pm \circ T_i = \Phi^\mp$ for any $i$. In the same way we could consider the harmonic polynomial $\Psi = \mathfrak{Re}((x_1+ix_2)^d) x_3 \cdots x_N$, together with symmetry group generated by the reflections $T_1,\dots,T_d$ with respect to the nodal hyperplanes of $\mathfrak{Re}((x_1+ix_2)^d)$, and by $R_3, \dots, R_N$, reflections with respect to the coordinate planes $\{x_i=0\}$, $i=3,\dots,N$.
\end{itemize}

\medskip 

\noindent \textbf{The case $k \ge 3$ in $\R^2$.} 
For $k \ge 3$ components, we first show how to recover Theorem 1.6 in \cite{BeTeWaWe}. We focus then for the moment on the dimension $N=2$. Let $k \ge 3$, and for any $m \in \N$ let $d=mk/2$. We denote by $R_d$ the rotation of angle $\pi/d$, by $T_y$ the reflection with respect to $\{y=0\}$ (this corresponds to consider complex conjugation in $\mathbb{C}$), and we consider the group $\mathcal{G}<\mathcal{O}(N)$ generated by $R_d$ and $T_y$. We define a homomorphism $h:\mathcal{G} \to \mathfrak{S}_k$ (the group of permutations of $\{1,\dots,k\}$) letting
\[
h(R_d) := (1 \ 2 \ \cdots \ d) \quad \text{and} \quad h(T_y): i \mapsto k+2-i,
\]
where the indexes are counted modulus $k$. We can explicitly check that $(k,\mathcal{G},h)$ is an admissible triplet. Let us consider the function 
\begin{align*}
u_1 &:= \begin{cases} r^d \cos (d \theta) & \text{in $\bigcup_{i=0}^{m-1} R_d^{ik}(\{-\pi/2d<\theta<\pi/2d\})$} \\
0 & \text{otherwise}
\end{cases}\\
u_2 &:= u_1 \circ R_d \\
\vdots \\
u_k & : =u_{k-1} \circ R_d = u_1 \circ R_d^{k-1}.
\end{align*}
It is $(\mathcal{G},h)$-equivariant, as
\begin{align*}
R_d \cdot \mf{u} &= \left( u_k \circ R_d, u_1 \circ R_d, \dots, u_{k-1} \circ R_d\right) = \mf{u} \\
T_y \cdot \mf{u} & = \left( u_1 \circ T_y, u_k \circ T_y, u_{k-1} \circ T_y, \dots, u_3 \circ T_y, u_2 \circ T_y\right) = \mf{u}.
\end{align*}
It clearly satisfies ($i$) and ($ii$) in Definition \ref{def: admissible pair}, and for ($iii$) it is sufficient to note that $u_j = u_1 \circ R_d^{j-1}$ for every $j =2,\dots,k$. By Theorem \ref{thm: new existence}, we obtain a $(\mathcal{G},h)$-equivariant solution $\mf{V}$ of \eqref{entire system}; the fact that $\mf{V}$ is $2$-dimensional follows again from the symmetries: if $\mf{V}$ were $1$-dimensional, then we could say that $\cup_{i \neq j} \{V_i-V_j=0\}$ is the union of straight parallel lines. But on the other hand $\{V_2-V_3=0\}=R_d(\{V_1-V_2=0\})$, which cannot be parallel whenever $d >1$, i.e. whenever $k\ge 3$. 

To complete the analogy with the results in \cite{BeTeWaWe}, we still would have to prove that $N(\mf{V},+\infty)=\ell(k,\mathcal{G},h)$ is equal to $d$. Since we are in dimension $N=2$, this can be done by means of explicit computations, following the line of reasoning already adopted in the previous examples. We decided to not stress on this point for the sake of brevity.


\medskip

\noindent \textbf{The general case $k \ge 3$ in $\R^3$.} The case $k \ge 3$ and $N \geq 3$ is intrinsically more involved, and hence we focus on some particular examples given by the group of symmetry of the Platonic polyhedra. Let us consider for instance the group $\mathcal{G}_4< \mathcal{O}(N)$ associated to the tetrahedron $\mathcal{T}$. It is known that this group is isomorphic to $\mathfrak{S}_4$. The isomorphism $h_4$ is obtained labelling all the vertices of $\mathcal{T}$, and associating to any $g \in \mathcal{G}_4$ the permutation induced on the vertices themselves. In order to define the function $\varphi$ satisfying ($i$)-($iii$) of Definition \ref{def: admissible pair}, we first take a tetrahedron with barycenter in $0$, and define on a face $A$ a positive function $\tilde \varphi_1$ being $0$ on $\pa A$, and being symmetric with respect to all the transformations in $\mathcal{G}_4$ leaving invariant $A$. By rotation, we can define $\tilde \varphi_2$, $\tilde \varphi_3$ and $\tilde \varphi_4$ on the remaining faces. Now, considering the radial projection of the tetrahedron into the unit sphere $\S^{2}$, we obtain a function $(\varphi_1,\dots,\varphi_4)$ whose $1$-homogeneous extension is by construction $(\mathcal{G}_4,h_4)$-equivariant, and satisfies ($i$)-($iii$) of Definition \ref{def: admissible pair}. Thus $(4,\mathcal{G}_4,h_4)$ is an admissible triplet, and Theorem \ref{thm: new existence} yields the existence of a $(\mathcal{G}_4,h_4)$-equivariant solution for the system  with $4$ components in $\R^3$. Since the symmetries of the tetrahedron involve the dependence on $3$ variables, this solution is not $2$-dimensional. 

In a similar way, one can construct $(\mathcal{G}_6,h_6)$-equivariant solutions with respect to the group of symmetries of the cube $\mathcal{G}_6$ (isomorphic to a subgroup of $\mathfrak{S}_8$ through a isomorphism $h_6$) for systems with $k=3$ or $k=6$ components. To this purpose, we consider a cube with barycenter in $0$ in $\R^3$, and we define on a face a positive function $\tilde \varphi_1$ being $0$ on $\pa A$, and being symmetric with respect to all the transformations in $\mathcal{G}_6$ leaving invariant $A$. By rotation, we can define $\tilde \varphi_2, \dots, \tilde \varphi_6$ on the remaining faces. Considering the radial projection of the cube into the unit sphere $\S^{2}$, we obtain a function $(\varphi_1,\dots,\varphi_6)$ whose $1$-homogeneous extension is $(\mathcal{G}_6,h_6)$-equivariant and satisfies ($i$)-($iii$) of Definition \ref{def: admissible pair}. Theorem \ref{thm: new existence} gives then a $3$-dimensional $(\mathcal{G}_6,h_6)$-equivariant solution to \eqref{entire system} with $6$ components in $\R^3$. In order to obtain a $3$-components $(\mathcal{G}_6,h_6)$-equivariant solution, we proceed as in the previous discussion replacing $\tilde \varphi_1$ with $\tilde \psi_1 = \tilde \varphi_1 + \tilde \varphi_4$, where $\varphi_4$ has support on the face opposite to $A$ in the cube. By rotation, we determine $\tilde \psi_2$ and $\tilde \psi_3$, each of them supported on the union of two opposite faces. As before, we can then consider the radial projection onto $\S^2$, and afterwards its $1$-homogeneous extension $(\psi_1,\psi_2,\psi_3)$, which is $(\mathcal{G}_6,h_6)$-equivariant and satisfies ($i$)-($iii$) of Definition \ref{def: admissible pair}. For the equivariance, we recall that any isometry of the cube is identified by the faces three given adjacent faces are mapped to (this is why we could construct solutions with cubical symmetry for systems with $3$ components). In conclusion, by Theorem \ref{thm: new existence} we obtain a $(\mathcal{G}_6,h_6)$-equivariant solution of \eqref{entire system} with $k=3$ components. 

Arguing in a similar way, we may also obtain equivariant solutions with respect to the symmetries of the octahedron for systems with $k=4$ and $k=8$ components, and so on.

\section{An Alt-Caffarelli-Friedman monotonicity formula for equivariant solutions}\label{sec: acf}

In the rest of the section we aim at proving Proposition \ref{prop: acf equiv}. We always suppose that $(k,\mathcal{G},h)$ is an admissible triplet, according to Definition \ref{def: admissible pair}. Moreover, we often omit the mention ``up to a subsequence" for simplicity.
%
The proof is divided in several steps, and, as usual when dealing with Alt-Caffarelli-Friedman monotonicity formulae for competing systems, is based upon a control on an ``approximated" optimal partition problem on $\S^{N-1}$. For any $\mf{u} \in H^1(\S^{N-1},\R^k)$, we let
\[
	I_\beta(\mbfu) := \frac{1}{k} \sum_{i=1}^{k}\gamma \left( \frac{\int_{\S^{n-1}} |\nabla_\theta u_i|^2 + \frac12 \beta u_i^2 \sum_{j \neq i} u_j^2}{\int_{\S^{n-1}} u_i^2}\right),
\]
where 
\[
\gamma(t):= \sqrt{\left(\frac{N-2}{2}\right)^2 + t} - \left(\frac{N-2}{2}\right).
\]
We denote by $\hat H_{(\mathcal{G},h)}$ the subspace of $(\mathcal{G},h)$-equivariant functions in $H^1(\S^{N-1},\R^k)$, and we introduce the optimal value 
\[
\ell_\beta(k,\mathcal{G},h):= \inf_{\hat H_{(\mathcal{G},h)}} I_\beta.
\]
In what follows, to keep the notation as simple as possible, we simply write $\ell$ and $\ell_\beta$ instead of $\ell(k,\mathcal{G},h)$ and $\ell_\beta(k,\mathcal{G},h)$, respectively.


\begin{lemma}\label{lem: problemi di minimo}
Both $\ell$ and $\ell_\beta$ are positive and achieved (for all $\beta>0$). It results $\ell_\beta \to \ell$ as $\beta \to +\infty$, and there exists a minimizer for $\ell_\beta$ which solves
\begin{equation}\label{eqn: EL}
\begin{cases}
-\Delta_{\theta} u_{i,\beta} = \lambda_\beta u_{i,\beta} - \beta u_{i,\beta} \sum_{j \neq i} u_j^2 & \text{in $\S^{N-1}$} \\
u_{i,\beta}>0 & \text{in $\S^{N-1}$} \\
\int_{\S^{N-1}} u_{i,\beta}^2=1 & \qquad \forall i,
\end{cases}
\end{equation}
where $\lambda_\beta \geq 0$, and $\Delta_\theta$ denotes the Laplace-Beltrami operator on $\S^{N-1}$. Moreover, $\mf{u}_\beta \wc \varphi$ weakly in $H^1(\S^{N-1},\R^k)$, and $\varphi$ is a nonnegative minimizer for $\ell$.
\end{lemma}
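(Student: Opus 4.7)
The plan is the direct method combined with Palais' principle of symmetric criticality for $\ell_\beta$, followed by a singular limit $\beta\to+\infty$ driven by the dissipation of the quartic coupling. Since $I_\beta$ is $0$-homogeneous in each component separately, the infimum is unchanged if I impose the normalization $\int_{\S^{N-1}} u_i^2=1$ for every $i$. Under this normalization $\ell_\beta>0$: $I_\beta(\mf{u})=0$ would force each $u_i$ to be a nonzero constant, contradicting the strict positivity of the coupling term. Similarly $\ell>0$: by (iii) of Definition \ref{def: admissible pair}, a constant $u_1$ would propagate to constants $u_j=u_1\circ g_j$, incompatible with (i) together with (ii).

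\textbf{Existence of $\mf{u}_\beta$ and the Euler--Lagrange system.} For a normalized minimizing sequence $\{\mf{u}_n\}\subset\hat H_{(\mathcal{G},h)}$, the strict monotonicity of $\gamma$ gives a uniform $H^1$ bound as well as a uniform control $\beta\int u_{i,n}^2\sum_{j\neq i}u_{j,n}^2\le C$. Rellich's theorem on $\S^{N-1}$ yields a subsequence with $\mf{u}_n\wc\mf{u}_\beta$ weakly in $H^1$, strongly in $L^2$, and a.e.; the normalization and equivariance \eqref{equiv action} pass to the limit, and I may replace each component by its absolute value without altering any of these. Fatou applied to the quartic integrands together with weak lower semicontinuity of the Dirichlet integral and the monotonicity of $\gamma$ yields $I_\beta(\mf{u}_\beta)\le\liminf I_\beta(\mf{u}_n)=\ell_\beta$, so $\mf{u}_\beta$ is a minimizer. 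Since $I_\beta$ and the constraints $\int u_i^2=1$ are invariant under the isometric action \eqref{equiv action}, Palais' principle of symmetric criticality gives that $\mf{u}_\beta$ is critical for the unrestricted variational problem; testing the $i$-th resulting equation with $u_{i,\beta}$ produces the Lagrange multiplier
\[
\lambda_{i,\beta}=\int_{\S^{N-1}}|\nabla_\theta u_{i,\beta}|^2+\beta\int_{\S^{N-1}}u_{i,\beta}^2\sum_{j\neq i}u_{j,\beta}^2\ge 0,
\]
and admissibility makes $h(\mathcal{G})$ act transitively on $\{1,\dots,k\}$, so the equivariance of $\mf{u}_\beta$ together with the orthogonal invariance of both integrals forces all $\lambda_{i,\beta}$ to coincide with a single $\lambda_\beta$, giving \eqref{eqn: EL}. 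Strict positivity $u_{i,\beta}>0$ on $\S^{N-1}$ then follows from the strong maximum principle applied to the linear equation $-\Delta_\theta u_{i,\beta}+\bigl(\beta\sum_{j\neq i}u_{j,\beta}^2-\lambda_\beta\bigr)u_{i,\beta}=0$ with $u_{i,\beta}\ge 0$ and $u_{i,\beta}\not\equiv 0$.

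\textbf{Passage $\beta\to+\infty$.} For any normalized $\varphi\in\Lambda_{(\mathcal{G},h)}$, the disjoint-support condition (ii) kills the coupling term, so $I_\beta(\varphi)$ coincides with the integrand of \eqref{def equivariant optimal value}; taking the infimum over $\Lambda_{(\mathcal{G},h)}\subset\hat H_{(\mathcal{G},h)}$ yields $\ell_\beta\le\ell$ uniformly in $\beta$. This in turn gives a uniform $H^1$ bound on $\{\mf{u}_\beta\}$ and the uniform estimate $\beta\int u_{i,\beta}^2 u_{j,\beta}^2\le C$. Extracting a weak $H^1$, a.e. limit $\varphi$, Fatou gives $\int\varphi_i^2\varphi_j^2=0$ for $i\neq j$, while the normalization, the sign and the equivariance persist in the limit, so $\varphi\in\Lambda_{(\mathcal{G},h)}$. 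Weak lower semicontinuity of the Dirichlet integral combined with the continuity and monotonicity of $\gamma$ then yields
\[
\ell\le\frac{1}{k}\sum_{i=1}^k\gamma\!\left(\frac{\int_{\S^{N-1}}|\nabla_\theta\varphi_i|^2}{\int_{\S^{N-1}}\varphi_i^2}\right)\le\liminf_{\beta\to+\infty} I_\beta(\mf{u}_\beta)=\liminf_{\beta\to+\infty}\ell_\beta,
\]
which paired with $\ell_\beta\le\ell$ gives both $\ell_\beta\to\ell$ and that $\varphi$ achieves $\ell$.

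\textbf{Expected obstacle.} The delicate step is the coincidence of the Lagrange multipliers $\lambda_{i,\beta}$: the normalizations $\int u_i^2=1$ are imposed separately on each component and a priori produce one multiplier per index, and only the admissibility hypothesis---specifically the transitivity of $h(\mathcal{G})$ on $\{1,\dots,k\}$ from Remark \ref{rem: ne basta una}---allows collapsing them into the single $\lambda_\beta$ that makes \eqref{eqn: EL} a symmetric coupled system with a common eigenvalue. The remainder is a standard compactness plus singular-limit argument of the type developed in the segregation literature.
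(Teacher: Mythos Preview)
Your argument is essentially the paper's: direct method on the normalized constraint set plus Palais' principle for existence and the Euler--Lagrange system, then weak compactness for $\beta\to+\infty$ (the paper phrases the latter as $\Gamma$-convergence of the monotone family $I_\beta\nearrow I_\infty$, but your direct ``$\ell_\beta\le\ell$ plus lower semicontinuity'' route is the same content).

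One claim is wrong, though harmless in the end: $I_\beta$ is \emph{not} $0$-homogeneous in each component separately. If you rescale only $u_i\mapsto t u_i$, the $j$-th summand ($j\neq i$) changes, since its numerator contains $\tfrac{\beta}{2}\int_{\S^{N-1}} u_j^2 u_i^2$; in fact, scaling the whole equivariant vector $\mf{u}\mapsto t\mf{u}$ and sending $t\to 0$ kills the coupling, so the unconstrained infimum of $I_\beta$ over $\hat H_{(\mathcal{G},h)}$ is actually $0$ (constants are equivariant). The paper sidesteps this by imposing the normalization from the outset without claiming invariance of the infimum, and every later use of $\ell_\beta$ concerns the normalized minimizer, so nothing downstream is affected. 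A smaller presentational gap: the raw Euler--Lagrange equation carries extra factors $\gamma'(R_j)/\gamma'(R_i)$ in front of each coupling term (the paper writes $\mu_{j,\beta}/\mu_{i,\beta}$), and it is the same transitivity argument you invoke for the $\lambda_{i,\beta}$'s that forces all $\mu_{i,\beta}$ equal and produces the clean form \eqref{eqn: EL}; your displayed formula for $\lambda_{i,\beta}$ already presumes this simplification.
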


\begin{proof}
Restricting ourselves to the subset of functions in $\hat H_{(\mathcal{G},h)}$ whose components have prescribed $L^2(\S^{N-1})$-norm equal to $1$, it is easy to check that the functional $I_\beta$ is weakly lower semi-continuous and coercive. Since $\hat H_{(\mathcal{G},h)}$ is also weakly closed, the direct method of the calculus of variations ensures the existence of a minimizer $\mf{u}_\beta$ for $\ell_\beta$, which can be assumed to be nonnegative. By the Palais' principle of symmetric criticality (notice that $I_\beta$ is invariant under the action of any symmetry in $\mathcal{O}(N)$), the Lagrange multipliers rule, and the strong maximum principle, it follows that $\mf{u}_\beta$ satisfies
\[
\begin{cases}
-\Delta_{\theta} u_{i,\beta} + \sum_{j \neq i}\frac12 \left( 1+ \frac{\mu_{j,\beta}}{\mu_{i,\beta}} \right) \beta u_{i,\beta} u_{j,\beta}^2 = \lambda_{i,\beta} u_{i,\beta} & \text{in $\S^{N-1}$} \\
u_{i,\beta}>0 & \text{in $\S^{N-1}$},
\end{cases}
\]
where
\[
\mu_{i,\beta}:= \gamma'\left( \int_{\S^{n-1}} |\nabla_\theta u_{i,\beta}|^2 + \frac12 \beta u_{i,\beta}^2 \sum_{j \neq i} u_{j,\beta}^2\right).
\]
The equation for $u_{i,\beta}$ is nothing but \eqref{eqn: EL}: indeed, thanks to the symmetries in $\hat H(\mathcal{G},h)$ (see Remark \ref{rem: ne basta una}), we have $\mu_{i,\beta}=\mu_{j,\beta}$ and $\lambda_{i,\beta}=\lambda_{j,\beta} \geq 0$ for every $i \neq j$. Finally, $\ell_\beta>0$ since otherwise $\mf{u}_\beta \equiv \mf{0}$, in contradiction with the normalization condition.

As far as $\ell$ is concerned, we introduce an auxiliary functional $I_\infty : \hat H_{(\mathcal{G},h)} \to (0,+\infty]$ defined by
\[
	I_\infty(\mbfu) := 
	\begin{cases}
		\frac{1}{k} \sum_{i=1}^{k}\gamma \left( \frac{\int_{\S^{n-1}} |\nabla u_i|^2 }{\int_{\S^{n-1}} u_i^2}\right) &\text{if $u_iu_j = 0$ a.e. on $\S^{n-1}$ for any $i \neq j$}\\
		+\infty &\text{otherwise}.
	\end{cases}
\]
It is easy to see that $I_\beta$ is increasing in $\beta$ and converges pointwise to $I_\infty$, implying that $I_\infty$ is a weakly lower semi-continuous functional in the weakly closed set $\hat H_{(\mathcal{G},h)}$, and that $I_\beta$ $\Gamma$-converges to $I_\infty$ in the weak $H^1$-topology. Moreover, being the family $\{I_\beta\}$ equi-coercive, any sequence $\{\mbfu_\beta\}$ of minimizers for $I_\beta$ converges to a minimizer $\mbfu$ of $I_\infty$. Finally, by definition, $\ell>\ell_\beta$ for every $\beta>0$, whence $\ell>0$ follows.
\end{proof}

Further properties of the sequence $\{\mf{u}_\beta\}$ are collected in the next two lemmas.

\begin{lemma}\label{lem: boundedness}
The sequence $\{\mf{u}_\beta\}$ is uniformly bounded in $\Lip(\S^{N-1})$. Moreover, the sequence $(\lambda_\beta)$ is bounded.
\end{lemma}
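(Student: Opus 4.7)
The plan is to bound $\lambda_\beta$ first, deduce a uniform $L^\infty$ bound, and then reduce the Lipschitz estimate to the Euclidean competing-systems results of \cite{NoTaTeVe,SoZi} via local charts on the sphere.

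\textbf{Bound on $\lambda_\beta$.} Testing the Euler--Lagrange equation \eqref{eqn: EL} against $u_{i,\beta}$ and integrating over $\S^{N-1}$, and using $\int_{\S^{N-1}} u_{i,\beta}^2 = 1$, one finds, for every $i$,
\[
\lambda_\beta = \int_{\S^{N-1}}|\nabla_\theta u_{i,\beta}|^2 + \beta \int_{\S^{N-1}} u_{i,\beta}^2 \sum_{j\neq i} u_{j,\beta}^2.
\]
On the other hand, since $\mf{u}_\beta$ minimizes $I_\beta$ and $\ell_\beta \leq \ell$ by Lemma \ref{lem: problemi di minimo}, monotonicity of $\gamma$ forces each of the $k$ quantities appearing in $I_\beta(\mf{u}_\beta)$ to be at most $\gamma^{-1}(k\ell)$, so
\[
\int_{\S^{N-1}}|\nabla_\theta u_{i,\beta}|^2 + \tfrac12\beta \int_{\S^{N-1}} u_{i,\beta}^2 \sum_{j\neq i} u_{j,\beta}^2 \leq C.
\]
Combining the two displays yields $0 \leq \lambda_\beta \leq 2C$, and the normalization ensures the lower bound $\lambda_\beta \geq 0$ already noted in Lemma \ref{lem: problemi di minimo}.

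\textbf{Uniform $L^\infty$ bound.} Since $u_{i,\beta}\geq 0$ and the competition term in \eqref{eqn: EL} has the correct sign, one gets $-\Delta_\theta u_{i,\beta} \leq \lambda_\beta u_{i,\beta}$, with $\lambda_\beta$ bounded by the previous step. Moser iteration on the compact manifold $\S^{N-1}$ then delivers $\|u_{i,\beta}\|_{L^\infty(\S^{N-1})} \leq C\|u_{i,\beta}\|_{L^2(\S^{N-1})} = C$, uniformly in $\beta$.

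\textbf{Uniform Lipschitz bound and main obstacle.} With the previous bounds I would rewrite \eqref{eqn: EL} as
\[
-\Delta_\theta u_{i,\beta} + \beta u_{i,\beta}\sum_{j\neq i}u_{j,\beta}^2 = f_{i,\beta}, \qquad \|f_{i,\beta}\|_{L^\infty(\S^{N-1})} = \|\lambda_\beta u_{i,\beta}\|_{L^\infty} \leq C,
\]
and cover $\S^{N-1}$ by a finite atlas of geodesic balls on which $-\Delta_\theta$ pulls back to a smooth, uniformly elliptic second-order operator on a Euclidean ball. In each chart the system falls under the uniform Lipschitz estimates for strongly competing systems established in \cite{NoTaTeVe, SoZi}, whose constants depend only on $\|\mf{u}_\beta\|_\infty$, $\|f_{i,\beta}\|_\infty$ and the ellipticity/regularity data of the chart; a finite covering argument then yields the uniform Lipschitz bound on the whole of $\S^{N-1}$. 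The main obstacle is precisely this last step: the cited Lipschitz estimate is a genuinely deep result proved via a blow-up plus Liouville argument, and one needs to verify that the argument is stable under the replacement of the Euclidean $-\Delta$ by the pullback of $-\Delta_\theta$. This is indeed the case, since blow-ups of any such smooth uniformly elliptic operator converge to constant-coefficient operators which, after a linear change of variables, reduce to $-\Delta$ on $\R^N$, where the relevant Liouville theorem applies.
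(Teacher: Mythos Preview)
Your proof is correct and follows essentially the same route as the paper: bound $\lambda_\beta$ via the identity obtained by testing \eqref{eqn: EL} against $u_{i,\beta}$ together with $\ell_\beta\le\ell$, upgrade to a uniform $L^\infty$ bound, and then invoke the uniform Lipschitz theory for strongly competing systems. The only cosmetic differences are that the paper phrases the $L^\infty$ step as a Brezis--Kato argument rather than Moser iteration (your choice is if anything more direct here, since $-\Delta_\theta u_{i,\beta}\le \lambda_\beta u_{i,\beta}$ with $\lambda_\beta$ bounded and $\|u_{i,\beta}\|_{L^2}=1$), and that the paper treats the passage from the Euclidean Lipschitz estimates of \cite{SoZi} to the sphere via a footnote pointing to forthcoming work, whereas you spell out the chart-plus-blow-up reduction explicitly; both approaches lead to the same conclusion.
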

\begin{proof}
Let $\{\mf{u}_\beta\}$ be a sequence of minimizers for $\ell_\beta$ satisfying \eqref{eqn: EL}, weakly converging to a minimizer $\mf{u}$ for $\ell$. As $I_\beta(\mf{u}_\beta) = \ell_\beta \le \ell$, there exists $C>0$ such that
\[
\int_{\S^{N-1}} \beta u_{i,\beta}^2 \sum_{j \neq i}  u_{j,\beta}^2 \le C.
\]
Moreover, by weak convergence, $\{\mf{u}_\beta\}$ is bounded in $H^1(\S^{N-1},\R^k)$. Therefore, testing the first equation in \eqref{eqn: EL} against $u_{i,\beta}$, we deduce that $\{\lambda_\beta\}$ is a bounded sequence of positive numbers, and this implies, through a Brezis-Kato argument (see for instance \cite[Page 124]{Tavares} for a detailed proof and \cite{BrKa} for the original argument), that $\{\mf{u}_\beta\}$ is uniformly bounded in $L^\infty(\S^{N-1},\R^k)$. By the main results in \cite{SoZi}, we infer that $\{\mf{u}_\beta\}$ is uniformly bounded\footnote{
It is worth mentioning that the results in \cite{SoZi} are proved for the Laplace operator in the interior of subsets of  $\R^N$, and their extension to a Riemaniann setting presents some technical difficulties; the general extension of \cite{SoZi} to equations on manifolds will be the object a future contribution \cite{SoZiFuturo}. We anticipate here the main argument: the key ingredients for the regularity results in \cite{SoZi} are elliptic estimates, an Almgren-type monotonicity formula and a sharp version of the Alt-Caffarelli-Friedman-type monotonicity formula. Thus, we need to extend these three tools for systems on $\S^{N-1}$. The elliptic theory is already available, as the Almgren-type monotonicity formula (see for instance \cite[Section 7]{tt}). The Alt-Caffarelli-Friedman-type monotonicity formula represents the only obstruction, but it can be obtained by combining the results in \cite{TeixZha} (Alt-Caffarelli-Friedman-type monotonicity formula for scalar equations on Riemaniann manifold) and in \cite{SoZi} (the sharp version of Alt-Caffarelli-Friedman-type monotonicity formula for systems in the euclidean space). Once that these three tools are available, the proof proceeds as in \cite{SoZi}.
} in $\Lip(\S^{N-1})$.
\end{proof}

\begin{lemma}\label{lem: problem solved by minimizer}
We have $\mf{u}_\beta \to \varphi$ strongly in $H^1(\S^{N-1})$ topology, in $\mathcal{C}^{0,\alpha}(\S^{N-1})$ for every $0<\alpha<1$, and
\[
\lim_{\beta \to +\infty} \beta \int_{\S^{N-1}} u_{i,\beta}^2 u_{j,\beta}^2 = 0.
\]
Moreover $\lambda_\beta \to \ell(\ell + N-2)$, and 
\[
\begin{cases}
-\Delta_\theta \varphi_i = \ell(\ell+N-2) \varphi_i & \text{in $\{\varphi_i >0\}$} \\
\int_{\S^{N-1}} \varphi_i^2 = 1.
\end{cases}
\]
\end{lemma}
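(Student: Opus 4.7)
The convergence $\mbfu_\beta \to \varphi$ in $\mathcal{C}^{0,\alpha}(\S^{N-1})$, $0<\alpha<1$, is immediate from the uniform Lipschitz bound of Lemma~\ref{lem: boundedness} and Arzelà–Ascoli; passing to a subsequence and comparing against the weak $H^1$ limit identifies the $\mathcal{C}^{0,\alpha}$ limit with $\varphi$, and in particular gives $\int_{\S^{N-1}}\varphi_i^2 = 1$ in the limit. The plan for the strong $H^1$ convergence and for the vanishing of the interaction integrals is to exploit how equivariance collapses all the individual Rayleigh-type quantities onto a single value.

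Concretely, writing $\mu_\beta := \int_{\S^{N-1}}|\nabla_\theta u_{i,\beta}|^2 + \tfrac{1}{2}\beta\, u_{i,\beta}^2\sum_{j\neq i} u_{j,\beta}^2$, Remark~\ref{rem: ne basta una} makes this value independent of $i$, and the identity $I_\beta(\mbfu_\beta)=\gamma(\mu_\beta)=\ell_\beta$ forces $\mu_\beta = \ell_\beta(\ell_\beta + N-2)$. The same argument applied to $\varphi$ (using $\int \varphi_i^2 = 1$ and $\varphi_i\varphi_j\equiv 0$) gives $\int|\nabla_\theta\varphi_i|^2 = \ell(\ell+N-2)$. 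Since $\ell_\beta \to \ell$, weak lower semicontinuity sandwiches
\[
\ell(\ell+N-2)=\int|\nabla_\theta\varphi_i|^2 \le \liminf_\beta \int|\nabla_\theta u_{i,\beta}|^2 \le \limsup_\beta \int|\nabla_\theta u_{i,\beta}|^2 \le \lim_\beta \mu_\beta = \ell(\ell+N-2),
\]
so $\int|\nabla_\theta u_{i,\beta}|^2 \to \int|\nabla_\theta\varphi_i|^2$. This upgrades the weak $H^1$ convergence to strong, while subtraction in the definition of $\mu_\beta$ forces $\tfrac{1}{2}\beta\int u_{i,\beta}^2\sum_{j\neq i}u_{j,\beta}^2 \to 0$; in particular each nonnegative summand $\beta\int u_{i,\beta}^2 u_{j,\beta}^2$ vanishes in the limit. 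Testing \eqref{eqn: EL} against $u_{i,\beta}$ and using $\int u_{i,\beta}^2=1$ then yields at once
\[
\lambda_\beta = \int|\nabla_\theta u_{i,\beta}|^2 + \beta\int u_{i,\beta}^2\sum_{j\neq i}u_{j,\beta}^2 \longrightarrow \ell(\ell+N-2).
\]

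To derive the limit equation on $\{\varphi_i>0\}$, I would fix $\psi \in C_c^\infty(\{\varphi_i>0\})$ and pass to the limit in the weak form of \eqref{eqn: EL}. The interaction term is killed by the previous step: since $\varphi_i \ge c>0$ on $\supp\psi$, uniform convergence gives $u_{i,\beta} \ge c/2$ there for $\beta$ large, so that
\[
\beta \int u_{i,\beta}\,\psi\, u_{j,\beta}^2 \le \frac{2\|u_{i,\beta}\|_\infty\|\psi\|_\infty}{c^2}\, \beta \int u_{i,\beta}^2 u_{j,\beta}^2 \longrightarrow 0,
\]
while the remaining two terms converge thanks to strong $H^1$ / $\mathcal{C}^{0,\alpha}$ convergence and to $\lambda_\beta \to \ell(\ell+N-2)$. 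The only delicate point I expect is the identification of the right equivariance-invariant quantity $\mu_\beta$ in the middle paragraph, which is what allows lower semicontinuity to be sharp; once that observation is made, every other step reduces to a standard segregation-limit manipulation.
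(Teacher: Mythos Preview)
Your argument is correct, and it takes a genuinely different route from the paper. The paper's proof is essentially a one-line citation: by the uniform Lipschitz bound of Lemma~\ref{lem: boundedness}, Theorem~1.4 of \cite{NoTaTeVe} applies directly and delivers the strong $H^1$ convergence, the $\mathcal{C}^{0,\alpha}$ convergence, and the vanishing of $\beta\int u_{i,\beta}^2 u_{j,\beta}^2$ all at once. The limit of $\lambda_\beta$ is then obtained by first extracting a subsequential limit $\lambda_\infty$ (from boundedness), passing to the limit in \eqref{eqn: EL} to get the eigenvalue equation for $\varphi_i$, and finally reading off $\lambda_\infty=\ell(\ell+N-2)$ from the definition of $\ell$.

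Your approach instead exploits the equivariance and the minimality of $\mbfu_\beta$ directly: since all the Rayleigh-type quotients collapse to a single value $\mu_\beta$, the identity $\gamma(\mu_\beta)=\ell_\beta$ pins $\mu_\beta$ down exactly, and $\ell_\beta\to\ell$ together with weak lower semicontinuity squeezes $\int|\nabla_\theta u_{i,\beta}|^2$ to its limit. This is more elementary and self-contained in the present setting---it avoids importing the full segregation machinery of \cite{NoTaTeVe}---while the paper's route is more robust, as it does not rely on the variational/equivariant structure and would apply to any uniformly Lipschitz sequence of solutions. One cosmetic point: in your last displayed estimate the constant should be $4/c^2$ rather than $2/c^2$ (from $u_{i,\beta}\ge c/2$ one gets $u_{i,\beta}^2\ge c^2/4$), but this is of course immaterial.
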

\begin{proof}
Thanks to Lemma \ref{lem: boundedness}, we can simply apply Theorem 1.4 in \cite{NoTaTeVe}. To check that $\lambda_\beta \to \ell(\ell+N-2)$, we observe that by boundedness $\lambda_\beta \to \lambda_\infty \ge 0$ as $\beta \to +\infty$. Therefore, recalling that $\mf{u}_\beta \wc \varphi$ in $H^1(\S^{N-1},\R^k)$, for $i=1,\dots,k$ we have
\[
\begin{cases}
-\Delta_\theta \varphi_i = \lambda_\infty \varphi_i \qquad \text{in $\{\varphi_i >0\}$} \\
\int_{\S^{N-1}} \varphi_i^2 = 1.
\end{cases}
\]
This implies that
\begin{align*}
\ell &= \frac{1}{k}\sum_i \sqrt{\left( \frac{N-2}{2}\right)^2+\int_{\S^{N-1}} |\nabla_\theta \varphi_i|^2} - \frac{N-2}{2} \\
&= \sqrt{\left( \frac{N-2}{2}\right)^2+\lambda_\infty} - \frac{N-2}{2},
\end{align*}
whence the thesis follows.
\end{proof}

The following result is the counterpart of Lemma 4.2 in \cite{Wa} in a $(\mathcal{G},h)$-equivariant setting, see also Theorem 5.6 in \cite{BeTeWaWe} for an analogue statement in dimension $N=2$.

\begin{lemma}\label{lem: opt approximated}
There exists a constant $C > 0$ such that
\[
	\ell_\beta \ge  \ell-  C \beta^{-1/4}.
\]
\end{lemma}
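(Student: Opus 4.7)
The strategy follows the classical scheme of Lemma 4.2 in \cite{Wa}, adapted to the equivariant setting. Starting from a minimizer $\mf{u}_\beta$ of $\ell_\beta$ (provided by Lemma \ref{lem: problemi di minimo}), I will build a $(\mathcal{G},h)$-equivariant competitor $\tilde{\mf{u}}_\beta$ for $\ell$ whose Rayleigh quotients are close to those of $\mf{u}_\beta$, with an explicit rate.

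\textbf{Construction.} Set
\[
\tilde u_{i,\beta}:=\bigl( u_{i,\beta}-\max_{j\neq i} u_{j,\beta}\bigr)^+, \qquad i=1,\dots,k.
\]
Being a symmetric operation on the index set, this construction commutes with the right action defined in \eqref{equiv action}, so the $(\mathcal{G},h)$-equivariance of $\mf{u}_\beta$ transfers to $\tilde{\mf{u}}_\beta$; moreover the supports of the $\tilde u_{i,\beta}$ are pairwise disjoint, and the relations $u_{i,\beta}=u_{1,\beta}\circ g_i$ guaranteed by Remark \ref{rem: ne basta una} are inherited by the $\tilde u_{i,\beta}$. The strong $\mathcal{C}^{0,\alpha}$-convergence to the limit $\varphi$ (Lemma \ref{lem: problem solved by minimizer}), whose components are nontrivial with disjoint supports, ensures that for all $\beta$ large enough the $\tilde u_{i,\beta}$ are nontrivial. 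Thus $\tilde{\mf{u}}_\beta\in\Lambda_{(\mathcal{G},h)}$ is a valid competitor for $\ell$, and
\[
\ell \le I_\infty(\tilde{\mf{u}}_\beta)=\frac{1}{k}\sum_{i=1}^k\gamma\!\left(\frac{\int_{\S^{N-1}}|\nabla_\theta\tilde u_{i,\beta}|^2}{\int_{\S^{N-1}}\tilde u_{i,\beta}^2}\right).
\]

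\textbf{Comparison.} The key two inputs are: (a) the uniform Lipschitz bound on $\mf{u}_\beta$ from Lemma \ref{lem: boundedness}; (b) the interaction bound $\beta\int_{\S^{N-1}} u_{i,\beta}^2 u_{j,\beta}^2\le C$ for $i\neq j$, which follows directly from $I_\beta(\mf{u}_\beta)=\ell_\beta\le \ell$. For the $L^2$-norm, the pointwise inequality $|u_{i,\beta}^2-\tilde u_{i,\beta}^2|\le 2u_{i,\beta}\max_{j\neq i}u_{j,\beta}$ together with Cauchy--Schwarz and (b) gives $|1-\int\tilde u_{i,\beta}^2|\le C\beta^{-1/2}$. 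For the Dirichlet energies, writing the derivative of $(u_{i,\beta}-\max_{j\neq i}u_{j,\beta})^+$ and splitting the domain according to a truncation parameter $s>0$ that separates the ``bulk'' region (where some $u_{j,\beta}$ is much smaller than $u_{i,\beta}$) from the ``interaction layer'' (where two components are comparable), the Lipschitz bound controls the $s$-dependent perimeter contribution, while the interaction bound in (b) controls the measure and energy on the interaction layer. Balancing these two error sources at the optimal scale $s\sim\beta^{-1/4}$ gives
\[
\int_{\S^{N-1}}|\nabla_\theta\tilde u_{i,\beta}|^2 \le \int_{\S^{N-1}}|\nabla_\theta u_{i,\beta}|^2 + C\beta^{-1/4}.
\]
Since $\gamma$ is Lipschitz on the bounded range where its argument varies and the interaction term $\tfrac{\beta}{2}\int u_{i,\beta}^2\sum_{j\neq i}u_{j,\beta}^2$ appearing in $I_\beta(\mf{u}_\beta)$ is nonnegative, combining the two estimates yields $I_\infty(\tilde{\mf{u}}_\beta)\le I_\beta(\mf{u}_\beta)+C\beta^{-1/4}=\ell_\beta+C\beta^{-1/4}$, from which the claim follows.

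\textbf{Main obstacle.} The only genuinely new point with respect to \cite{Wa,BeTeWaWe} is that the ``disjointification'' procedure must respect the $(\mathcal{G},h)$-equivariance: a naive construction that singles out a preferred component would in general break the symmetry, destroying membership in $\Lambda_{(\mathcal{G},h)}$. This is precisely the reason for choosing the symmetric formula $\tilde u_{i,\beta}=(u_{i,\beta}-\max_{j\neq i}u_{j,\beta})^+$, which by its very definition is invariant under index permutations and hence compatible with the equivariant setting. Once this is in place, the energy estimates are essentially those of the unconstrained problem.
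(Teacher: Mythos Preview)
Your plan is essentially the paper's strategy: build from $\mf{u}_\beta$ a segregated equivariant competitor and compare Rayleigh quotients. The paper uses the competitor $\hat u_{i,\beta}=(u_{i,\beta}-\sum_{j\neq i}u_{j,\beta})^+$ rather than your $(u_{i,\beta}-\max_{j\neq i}u_{j,\beta})^+$; both are permutation-symmetric and hence $(\mathcal{G},h)$-equivariant (the paper isolates this as a separate lemma), and your identification of equivariance as the only genuinely new point is exactly right. The $L^2$ estimate you sketch is fine and matches the paper's.

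The one place where your outline is underspecified is the Dirichlet estimate. You list as ``key inputs'' only (a) the uniform Lipschitz bound and (b) the $O(\beta^{-1})$ interaction bound, and then appeal to a bulk/layer splitting with a balancing parameter $s$. But (a) and (b) alone do \emph{not} force $\int_{\{\tilde u_{i,\beta}>0\}}|\nabla_\theta u_{j,\beta}|^2$ to be small: a Lipschitz function with values $\le C\beta^{-1/4}$ can still have gradient of order one on a set of unit measure. What actually closes the estimate---both in \cite{Wa} and in the paper---is the \emph{equation} \eqref{eqn: EL}: integrating it gives the uniform $L^1$ bound $\int_{\S^{N-1}}|\Delta_\theta u_{i,\beta}|\le C$, which via the divergence theorem controls the normal-derivative (``perimeter'') terms and allows an integration by parts converting $\int|\nabla_\theta \hat u_{i,\beta}|^2$ into quantities bounded by $\int|\nabla_\theta u_{i,\beta}|^2+C\beta^{-1/4}$. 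So the PDE is a third essential input, not just background for (a). This is also why the paper's choice $\sum_{j\neq i}u_{j,\beta}$ is more convenient than your $\max_{j\neq i}u_{j,\beta}$: the sum is smooth, so the needed integrations by parts on $\{\hat u_{i,\beta}>\delta\}$ go through cleanly, whereas the max is only Lipschitz and its Laplacian is a measure. If you keep the $\max$ construction you will need an extra regularization step; switching to the $\sum$ construction avoids this with no loss.
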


Before proving the lemma, we need a technical result. We recall that $\hat H_{(\mathcal{G},h)}$ denotes the set of $(\mathcal{G},h)$-equivariant functions in $H^1(\S^{N-1},\R^k)$. 

\begin{lemma}\label{lem: cappucci equivariant}
Let $\mf{u} \in \hat H_{(\mathcal{G},h)}$. Then also the function $\hat{\mf{u}}$, defined by 
\[
\hat u_{i} = v_{i}^+:= \left( u_{i} - \sum_{j \neq i} u_{j,}\right)^+,
\]
belongs to $\hat H_{(\mathcal{G},h)}$.
\end{lemma}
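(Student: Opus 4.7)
The plan is to verify by direct computation that for every $g \in \mathcal{G}$ one has $\hat{\mf{u}} = g \cdot \hat{\mf{u}}$, exploiting the fact that the expression $\sum_{j \neq i} u_j$ is symmetric in the indices $j \neq i$ so that the operation $\mf u \mapsto \hat{\mf u}$ commutes with the equivariant action \eqref{equiv action}. Fix $g \in \mathcal{G}$ and set $\sigma := h(g) \in \mathfrak{S}_k$. The equivariance of $\mf{u}$ reads $u_i = u_{\sigma^{-1}(i)} \circ g$ for every $i$, which, after the change of index $j = \sigma^{-1}(i)$, can equivalently be rewritten as
\begin{equation*}
u_{\sigma(j)}(x) = u_j(g x) \qquad \text{for every } j \in \{1,\dots,k\} \text{ and every } x \in \mathbb{S}^{N-1}.
\end{equation*}

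Next I would compute the $i$-th component of $g \cdot \hat{\mf{u}}$, namely $\hat u_{\sigma^{-1}(i)} \circ g$, and check that it equals $\hat u_i$. Since $(\cdot)^+$ is applied componentwise it is enough to prove
\begin{equation*}
u_{\sigma^{-1}(i)}(g x) - \sum_{j \neq \sigma^{-1}(i)} u_j(g x) \;=\; u_i(x) - \sum_{m \neq i} u_m(x).
\end{equation*}
By the rewritten equivariance, $u_{\sigma^{-1}(i)}(g x) = u_i(x)$ and, for $j \neq \sigma^{-1}(i)$, $u_j(g x) = u_{\sigma(j)}(x)$. Re-indexing via $m = \sigma(j)$, which is a bijection from $\{1,\dots,k\}\setminus\{\sigma^{-1}(i)\}$ onto $\{1,\dots,k\}\setminus\{i\}$, the sum $\sum_{j \neq \sigma^{-1}(i)} u_{\sigma(j)}(x)$ becomes $\sum_{m \neq i} u_m(x)$, which yields the displayed identity. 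Taking positive parts gives $\hat u_i(x) = \hat u_{\sigma^{-1}(i)}(g x)$, i.e.\ $\hat{\mf u} = g \cdot \hat{\mf u}$.

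Finally, I would observe that $\hat{\mf u}$ lies in $H^1(\mathbb{S}^{N-1},\R^k)$, because finite linear combinations and the positive-part operation preserve $H^1$ regularity; hence $\hat{\mf u} \in \hat H_{(\mathcal{G},h)}$. No step is genuinely difficult here — the argument is essentially bookkeeping — and the only point requiring care is keeping track of the direction of $\sigma$ versus $\sigma^{-1}$ when applying \eqref{equiv action}, which is precisely what makes the symmetry of the competitor sum $\sum_{j\neq i} u_j$ in the index $j$ the key structural fact.
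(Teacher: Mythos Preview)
Your proof is correct and follows essentially the same approach as the paper: both arguments reduce to showing that $v_i = u_i - \sum_{j \neq i} u_j$ is $(\mathcal{G},h)$-equivariant by re-indexing the sum via the bijection $\sigma$ (the paper writes this as $(h(g))^{-1}$), and then note that taking positive parts and the $H^1$ regularity are immediate.
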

\begin{proof}
As $u_{i} \in H^1(\S^{N-1})$, it follows straightforwardly that $\hat{\mf{u}} \in H^1(\S^{N-1},\R^k)$. We have to show that it is also $(\mathcal{G},h)$-equivariant, and to this aim it is sufficient to show that $\mf{v}$ is $(\mathcal{G},h)$-equivariant. This can be checked directly:
\begin{align*}
v_{(h(g))^{-1}(i)}(g(x)) & = u_{(h(g))^{-1}(i)}(g(x)) - \sum_{j \neq (h(g))^{-1}(i)} u_j(g(x)) \\
& =  u_{(h(g))^{-1}(i)}(g(x)) - \sum_{j \neq i} u_{(h(g))^{-1}(j)}(g(x)) = v_{i}(x),
\end{align*}
where the last equality follows by the fact that $\mf{u}$ is $(\mathcal{G},h)$-equivariant.
\end{proof}

\begin{proof}[Proof of Lemma \ref{lem: opt approximated}]
In order to simplify the notation, only in this proof we write $\nabla$ and $\Delta$ instead of $\nabla_\theta$ and $\Delta_\theta$, respectively.
Let us consider the functions $\hat{\mf{u}}_\beta$, defined in Lemma \ref{lem: cappucci equivariant}. Since the components of $\hat{\mf{u}}_\beta$ have disjoint supports, we can use it as competitor for $\ell$. We aim at showing that $\hat{\mf{u}}_\beta$ is actually close enough to $\mf{u}_\beta$ in the energy sense, and in doing this we shall use many times the properties proved in Lemma \ref{lem: boundedness}. To be precise, we shall prove that there exists a constant $C>0$ such that
\begin{align}
	1 - C \beta^{-1/2} \leq \int_{\S^{n-1}} \hat u_{i,\beta}^2 \leq 1 + C \beta^{-1/2}  \label{vicinanza L2},\\
	\int_{\S^{N-1}} |\nabla \hat u_{i,\beta} |^2 \leq \int_{ \S^{N-1} } |\nabla u_{i,\beta}|^2 + C \beta^{-1/4}. \label{vicinanza H1} 
\end{align}
Before we continue, let us point out that second estimate can be derived from an analogous one, stated as follow: there exists $C>0$ independent of $\beta$ and $\bar \delta>0$ such that for almost any $\delta \in (0, \bar \delta)$ we have
\[
	\int_{\{\hat u_{i,\beta} > \delta\} } |\nabla \hat u_{i,\beta} |^2 \leq \int_{ \S^{N-1} } |\nabla u_{i,\beta}|^2 + C \beta^{-1/4} + C \delta. 
\]
Indeed, if the previous estimate is satisfied,
\begin{align*}
	\int_{\S^{N-1}} |\nabla \hat u_{i,\beta} |^2 & = \int_{\{\hat u_{i,\beta} > 0\} } |\nabla \hat u_{i,\beta} |^2 = \lim_{\delta \to 0^+} \int_{\{\hat u_{i,\beta} > \delta\} } |\nabla \hat u_{i,\beta} |^2 \\
	&  \leq \int_{ \S^{N-1} } |\nabla u_{i,\beta}|^2 + C \beta^{-1/4}.
\end{align*}
Notice that in principle the value $\bar \delta$ could depend on $\beta$, but this is not a problem since $C$ is, on the contrary, a universal constant.

\noindent \textbf{Pointwise bounds.}
The boundedness of $\{\mf{u}_\beta\}$ in $\Lip(\S^{N-1})$, Lemma \ref{lem: boundedness}, implies that there exists a constant $C_1 > 0$ such that
\begin{equation}\label{upper estimate on sphere}
	\beta^{1/2} u_{i,\beta} u_{j,\beta} \leq C_1 \qquad \forall i \neq j.
\end{equation}
The proof is a straightforward adaptation of the one in \cite[Theorem 1.1]{SoZi2}, which regards the same estimate in subsets of $\R^N$.

As a consequence we have that for each $\theta \in \S^{N-1}$ and each $\beta > 0$ 
\begin{equation}\label{lower estimate just one}
\text{there exists at most one index $i$ such that $u_{i,\beta}(\theta) \geq 2 k C_1^{1/2} \beta^{-1/4}$},
\end{equation}
where $C_1$ is the same constant appearing in \eqref{upper estimate on sphere}. Indeed, assuming the contrary, there would exists two distinct indices $i \neq j$ satisfying the previous inequality, and hence
\[
	4 k^2 C_1 \beta^{-1/2} \leq u_{i,\beta}(\theta) u_{j,\beta}(\theta) \leq C_1 \beta^{-1/2},
\]
a contradiction.

Finally, we observe that
\begin{equation}\label{la seconda non batte la somma}
\text{if $\hat u_{i,\beta}(\theta) = 0$, then $u_{i,\beta}(\theta) \le 2 k (k-1) C_1^{1/2} \beta^{-1/4}$.}
\end{equation}
If not, we have that \eqref{lower estimate just one} holds for $i$, and moreover
\[
 2 k (k-1) C_1^{1/2} \beta^{-1/4} \le u_{i,\beta} (\theta) \le \sum_{j \neq i} u_{j,\beta}(\theta) \le (k-1) \max_{j \neq i} u_{j,\beta}(\theta);
\]
hence there exist two indexes for which \eqref{lower estimate just one} is satisfied in $\theta$, a contradiction.

\noindent \textbf{Integrals bounds for the Laplacian.}
We prove that there exists a constant $C > 0$ (independent of $\beta$) such that
\begin{equation}\label{bounds on lap}
	\int_{\S^{N-1}} |\Delta u_{i,\beta} | \leq C.
\end{equation}
Indeed, directly form the equation and the divergence theorem
\[
	0 = \int_{\S^{N-1}} (-\Delta u_{i,\beta} )= \int_{\S^{N-1}} \lambda_{\beta} u_{i,\beta} - \beta u_{i,\beta} \sum_{j \neq i} u_j^2;
\]
that is
\[
	0 \leq \int_{\S^{N-1}} \beta u_{i,\beta} \sum_{j \neq i} u_{j,\beta}^2 =  \int_{\S^{N-1}} \lambda_{\beta} u_{i,\beta} \leq C,
\]
as the functions $u_{i,\beta}$ are boudned in $L^\infty(\S^{N-1})$, and $\{ \lambda_{\beta} \}$ is  bounded. Consequently
\[
	\int_{\S^{N-1}} |\Delta u_{i,\beta}| \leq \int_{\S^{N-1}} \lambda_{\beta} u_{i,\beta} + \beta u_{i,\beta} \sum_{j \neq i} u_{j,\beta}^2 \leq C.
\]

\noindent \textbf{Integrals bounds for the competition term.}
Using \eqref{lower estimate just one} and the computations in the previous point, we deduce that
\begin{align*}
	\int_{\S^{N-1}} \beta \sum_{i \neq j} u_{i,\beta}^2 u_{j,\beta}^2 &\le \sum_{i \neq j} \left( \|u_{i,\beta}\|_{L^\infty(\{ u_{i,\beta} \le u_{j,\beta}\})}   \int_{ \{u_{i,\beta} \le u_{j,\beta}\}} \beta u_{i,\beta} u_{j,\beta}^2 \right. \\
	 &\left.+  \|u_{j,\beta}\|_{L^\infty(\{ u_{j,\beta} < u_{i,\beta}\})}   \int_{ \{u_{j,\beta} < u_{i,\beta}\}} \beta u_{j,\beta} u_{i,\beta}^2 \right)   \\
	   &\le C \beta^{-1/4} \sum_{i=1}^k \int_{ \{u_{i,\beta} \le u_{j,\beta}\}} \beta u_{i,\beta} \sum_{j \neq i}u_{j,\beta}^2 \leq C \beta^{-1/4}.
\end{align*}

\noindent \textbf{Integrals bounds for the normal derivatives.}
For analogous reasons, we can show that there exists a constant $C>0$ and $\bar \delta>0$ small enough such that for almost every $\delta \in (0,\bar \delta )$ it holds
\[
	\int_{\partial \{\hat u_{i,\beta} > \delta\}} | \partial_{\nu} \hat u_{i,\beta} |  \leq C.
\]
Firstly, since for $\beta$ fixed the function $\hat u_{i,\beta}$ is regular, the set $\partial \{\hat u_{i,\beta} > \delta\}$ is regular for almost every $\delta>0$, by Sard's Lemma. Moreover, since $\hat u_{i,\beta}$ is nonnegative and regular, if $\delta< \bar \delta$ is small enough
\begin{equation}\label{eq 7000}
	\int_{\partial \{\hat u_{i,\beta} > \delta \}} | \partial_{\nu} \hat u_{i,\beta} |  = -\int_{\partial \{\hat u_{i,\beta} > \delta \}}  \partial_{\nu} \hat u_{i,\beta}.
\end{equation}
Hence for almost every $\delta \in (0,\bar \delta)$ the set $\partial \{\hat u_{i,\beta} > \delta\}$ is regular, and \eqref{eq 7000} holds. With this choice we are in position to apply the divergence theorem, and consequently
\[
	\left| \int_{\partial \{\hat u_{i,\beta} > \delta \}}  \partial_{\nu} \hat u_{i,\beta} \right| = \left| \int_{ \{\hat u_{i,\beta} > \delta \}} \Delta \hat u_{i,\beta} \right| \leq  \int_{ \{\hat u_{i,\beta} > \delta \}} \sum_{j=1}^k |\Delta u_{j,\beta}| \leq C,
\]
where $C$ is independent of $\beta$ by \eqref{bounds on lap}. With similar computations we have also the uniform estimate
\[
	\left|\int_{\partial \{\hat u_{i,\beta} > \delta\}} \partial_{\nu}  u_{i,\beta}\right| \leq C.
\]

\noindent \textbf{Estimates for the $L^2(\S^{N-1})$ norm.}
Thanks to \eqref{lower estimate just one} and \eqref{la seconda non batte la somma}, we have
\begin{multline*}
		\int_{\S^{n-1}} (\hat u_{i,\beta} - u_{i,\beta})^2 = \int_{\{\hat u_{i,\beta} > 0\}} (\hat u_{i,\beta} - u_{i,\beta})^2 +  \int_{\{\hat u_{i,\beta} = 0\}} (\hat u_{i,\beta} - u_{i,\beta})^2\\
		= \int_{ \{u_{i,\beta} > \sum_{j \neq i} u_{j,\beta}\}} \left(\sum_{j \neq i} u_{j,\beta}\right)^2 +  \int_{\{\hat u_{i,\beta} = 0\}} u_{i,\beta}^2 \leq C \beta^{-1/2},
\end{multline*}
whence \eqref{vicinanza L2} follows.

\noindent \textbf{Estimates for the $H^1(\S^{N-1})$ seminorm.}
As a last step, we wish to estimate the $L^2$ norm of $ \nabla \hat u_{i,\beta}$. Since $\pa \{\hat u_{i,\beta}>\delta\}$ is regular, we can apply the divergence theorem deducing that
\begin{align*}
	\int_{ \{\hat u_{i,\beta} > \delta \}} |\nabla \hat u_{i,\beta} |^2  &= \int_{ \{\hat u_{i,\beta} > \delta\} } (- \Delta \hat u_{i,\beta}) \hat u_{i,\beta} + \int_{ \partial\{\hat u_{i,\beta} > \delta\} } (\partial_\nu \hat u_{i,\beta}) \hat u_{i,\beta} \\
	 &= \underbrace{ \int_{\{\hat u_{i,\beta} > \delta\} } (- \Delta u_{i,\beta}) u_{i,\beta}}_{\text{(I)}}  +  \int_{ \{\hat u_{i,\beta} > \delta\} }\Delta u_{i,\beta} \sum_{j \neq i} u_{j,\beta}  \\ &+ \underbrace{\int_{ \{\hat u_{i,\beta} > \delta\} } \Delta\left(\sum_{j \neq i} u_{j,\beta}\right) \hat u_{i,\beta}}_{\text{(II)}} + \delta \int_{ \partial\{\hat u_{i,\beta} > \delta\} } \partial_\nu \hat u_{i,\beta}
\end{align*}
The first term (I) can be bounded using the equation, also recalling that $\lambda_\beta\geq0$:
\[
\begin{split}
	\int_{ \{\hat u_{i,\beta} > \delta\} } &  (- \Delta u_{i,\beta}) u_{i,\beta} = \int_{ \{\hat u_{i,\beta} > \delta\} } \lambda_{\beta} u_{i,\beta}^2 - \beta u_{i,\beta}^2 \sum_{j \neq i} u_{j,\beta}^2 \\
	\le &\int_{ \S^{N-1} } \lambda_{\beta} u_{i,\beta}^2 - \beta u_{i,\beta}^2 \sum_{j \neq i} u_{j,\beta}^2 +  \int_{ \S^{N-1} \setminus  \{\hat u_{i,\beta} > \delta \} } \beta u_{i,\beta}^2 \sum_{j \neq i} u_{j,\beta}^2  \\
	 = &\int_{ \S^{N-1} } |\nabla u_{i,\beta}|^2 +  \int_{ \S^{N-1} \setminus \{\hat u_{i,\beta} > \delta \} } \beta u_{i,\beta}^2 \sum_{j \neq i} u_{j,\beta}^2.
\end{split}
\]
The term (II) can be expanded further as
\begin{multline*}
	\int_{ \{\hat u_{i,\beta} > \delta\} } \Delta\left(\sum_{j \neq i} u_{j,\beta}\right) \hat u_{i,\beta} = -\int_{ \{\hat u_{i,\beta} > \delta\} }  \nabla \left(\sum_{j \neq i} u_{j,\beta}\right) \cdot \nabla\hat u_{i,\beta} \\
 + \delta \int_{ \partial\{\hat u_{i,\beta} > \delta\} } \partial_\nu \left(\sum_{j \neq i} u_{j,\beta}\right) 
 = \int_{ \{\hat u_{i,\beta} > \delta\} } \left(\sum_{j \neq i} u_{j,\beta}\right) \Delta \hat u_{i,\beta} \\
 - \int_{ \partial \{ \hat u_{i,\beta} > \delta \} } \left(\sum_{j \neq i} u_{j,\beta}\right)  \partial_{\nu}\hat u_{i,\beta} + \delta \int_{ \partial\{\hat u_{i,\beta} > \delta\} } \partial_\nu \left(\sum_{j \neq i} u_{j,\beta}\right).
\end{multline*}
Recollecting the previous computations, and using again \eqref{lower estimate just one}, we have
\begin{multline*}
	\int_{ \{\hat u_{i,\beta} > \delta\} } |\nabla \hat u_{i,\beta} |^2 \leq \int_{ \S^{N-1} } |\nabla u_{i,\beta}|^2 +  \int_{ \S^{N-1} \setminus \{ \hat u_{i,\beta} > \delta \} } \beta u_{i,\beta}^2 \sum_{j \neq i} u_{j,\beta}^2  \\ 
	+  \int_{ \{\hat u_{i,\beta} > \delta\} }\Delta u_{i,\beta} \sum_{j \neq i} u_{j,\beta} + \int_{ \{\hat u_{i,\beta} > \delta\} } \left(\sum_{j \neq i} u_{j,\beta}\right) \Delta \hat u_{i,\beta} \\
	- \int_{ \partial \{ \hat u_{i,\beta} > \delta \} } \left(\sum_{j \neq i} u_{j,\beta}\right)  \partial_{\nu}\hat u_{i,\beta}
	+ \delta \int_{ \partial\{\hat u_{i,\beta} > \delta\} } \partial_\nu u_{i,\beta} \\
	\leq \int_{ \S^{N-1} } |\nabla u_{i,\beta}|^2 + C \beta^{-1/4} + C \delta,
\end{multline*}
which, as already observed, implies \eqref{vicinanza H1}.

With \eqref{vicinanza L2} and \eqref{vicinanza H1} we are in position to complete the proof. By minimality $\ell \le I_\infty(\hat{\mf{u}}_\beta)$ for every $\beta$, which gives
\begin{align*}
	\ell &\leq \frac{1}{k} \sum_{i = 1}^{k} \gamma\left(\frac{\int_{ \S^{N-1} } |\nabla \hat u_{i,\beta}|^2}{\int_{ \S^{N-1} } \hat u_{i,\beta}^2}\right) \leq  \frac{1}{k} \sum_{i = 1}^{k} \gamma\left(\frac{\int_{ \S^{N-1} } |\nabla u_{i,\beta}|^2 + C\beta^{-1/4} }{1- C\beta^{-1/2}}\right) \\
	&\leq \frac{1}{k} \sum_{i = 1}^{k} \gamma\left(\int_{ \S^{N-1} } |\nabla u_{i,\beta}|^2 + \frac12 \beta u_{i,\beta}^2 \sum_{j \neq i} u_{j, \beta}^2\right) +  C\beta^{-1/4} = \ell_{\beta} +  C\beta^{-1/4}. \qedhere
\end{align*}
\end{proof}

The proof of Proposition \ref{prop: acf equiv} can be obtained in a somehow usual way. 

\begin{proof}[Sketch of the proof of Proposition \ref{prop: acf equiv}]
Arguing as in Section 7 of \cite{ctv}, or \cite[Lemma 2.5]{NoTaTeVe}, or else \cite[Theorem 3.14]{SoZi}, it is possible to check that 
\[
\frac{d}{dr}\log \left( \frac{J_1(r) \cdots J_k(r)}{r^{2k\ell}}\right) = -\frac{2k\ell}{r} + \frac{2}{r}\sum_{i} \gamma \left( \frac{r^2\int_{\pa B_r} |\nabla u_i|^2 +  \frac12 u_i^2 \sum_{j \neq i} u_j^2}{\int_{\pa B_r} u_i^2}\right).
\]
Changing variables in the integrals (see Theorem 3.14 in \cite{SoZi} for the details), we deduce that 
\[
\sum_{i} \gamma \left( \frac{r^2\int_{\pa B_r} |\nabla u_i|^2 + \frac12 u_i^2 \sum_{j \neq i} u_j^2}{\int_{\pa B_r} u_i^2}\right) \ge k \ell_{r^2},
\]
where $\ell_{r^2}$ denotes the optimal value $\ell_\beta$ for $\beta=r^2$. Coming back to the previous equation, and using Lemma \ref{lem: opt approximated}, we conclude that
\[
\frac{d}{dr}\log \left( \frac{J_1(r) \cdots J_k(r)}{r^{2k\ell}}\right)  \ge \frac{2k}{r}(\ell_{r^2}-\ell)  \ge -2kC r^{-3/2},
\]
and integrating the thesis follows.
\end{proof}

\section{Construction of equivariant solutions}\label{sec: main}

For an admissible triplet $(k,\mathcal{G},h)$, we prove the existence of a $(\mathcal{G},h)$-equivariant solution to \eqref{entire system} with $k$ components. We partially follow the method introduced in \cite{BeTeWaWe}, which consists in two steps:
\begin{itemize}
\item firstly, we prove the existence of a sequence of $(\mathcal{G},h)$-equivariant solutions $\mf{V}_R$, defined in balls of increasing radii $R \to +\infty$;
\item secondly, we show that such sequence converges locally uniformly in $\R^N$ to a nontrivial solution.
\end{itemize}
With respect to \cite{BeTeWaWe}, we modify the construction conveniently choosing $R$ from the beginning; this simplifies substantially the proof of the convergence of $\{\mf{V}_R\}$, and we refer to the forthcoming Remark \ref{rem: differenza noi loro} for more details. Finally, in the last part of the proof we characterize the growth of the solution using the Alt-Caffarelli-Friedman monotonicity formula for $(\mathcal{G},h)$-equivariant solutions.

By Lemma \ref{lem: problemi di minimo}, we know that the optimal value $\ell$ (see Definition \ref{def equivariant optimal value}) is achieved by a nonnegative $(\mathcal{G},h)$-equivariant function $\varphi \in H^1(\S^{N-1},\R^k)$. Differently from the previous section, we take
\begin{equation}\label{normalization maggio}
\int_{\S^{N-1}} \varphi_i^2 = \frac{1}{k} \quad \Longleftrightarrow \quad \sum_{i=1}^k \int_{\S^{N-1}} \varphi_i^2 = 1.
\end{equation}
This choice is possible, since the minimum problem for $\ell$ is invariant under scaling of type $t \mapsto t \varphi$ with $t \in \R$, and simplifies some computation. 

\begin{lemma}\label{lem: existence}
For any $\beta >0$ there exists a $(\mathcal{G},h)$-equivariant solution $\{\mf{U}_\beta\}$ to the problem
\[	
	\begin{cases}
		\Delta U_{i,\beta} = \beta U_{i,\beta} \sum_{j \neq i} U_{j,\beta}^2 &\text{in $B_1$}\\
		U_{i,\beta}>0 & \text{in $B_1$} \\
		U_{i,\beta}= \varphi_i &\text{on $\partial B_1= \S^{N-1}$} .
	\end{cases}
\]
Moreover
\begin{itemize}
	\item[($i$)] $U_{i,\beta}(0) = U_{j,\beta}(0)$ $\forall i,j = 1, \dots, k$  and $\beta > 0$;
	\item[($ii$)] letting 
	\[
	\mathcal{E}_\beta(\mf{U}) = \int_{B_1} \sum_{i=1}^k |\nabla U_i|^2 + \beta\sum_{i<j} U_i^2 U_j^2,
	\]
	the uniform estimate $\mathcal{E}_\beta(\mf{U}_\beta) \leq \ell$ holds.
	\item[($iii$)] there exists a Lipschitz continuous function $\mf{0} \not \equiv \mf{U}_\infty$ such that, up to a subsequence, $\mf{U}_\beta \to \mf{U}_\infty$ in $\C^{0,\alpha}(B_1)$ for every $\alpha \in (0,1)$ and in $H^1_{\loc}(B_1)$.
\end{itemize}
\end{lemma}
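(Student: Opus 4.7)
For fixed $\beta > 0$ I would minimize $\mathcal{E}_\beta$ over the set
\[
K := \{\mf{U} \in H^1(B_1,\R^k) : \mf{U} \in H_{(\mathcal{G},h)}, \ U_i \ge 0, \ \mf{U} = \varphi \text{ on } \partial B_1\}.
\]
This set is weakly closed and non-empty: the $\ell$-homogeneous extension $\bar{\mf{U}}(x) := |x|^\ell \varphi(x/|x|)$ lies in $K$, since every $g \in \mathcal{G}\subset\mathcal{O}(N)$ commutes with the maps $x \mapsto |x|$ and $x \mapsto x/|x|$. The functional is coercive (apply Poincaré's inequality to $\mf{U}-\bar{\mf{U}}$) and weakly lower semi-continuous (the interaction term by Fatou along an a.e.-convergent subsequence supplied by Rellich's theorem). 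The direct method then gives a minimizer $\mf{U}_\beta$; Palais' principle of symmetric criticality promotes it to a critical point of $\mathcal{E}_\beta$ without the symmetry constraint, hence to a weak solution of the system. Non-negativity, subharmonicity of each $U_{i,\beta}$ (since $\Delta U_{i,\beta}\ge 0$), and the non-triviality of the boundary datum $\varphi_i$ yield strict positivity by the strong maximum principle.

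\textbf{Properties ($i$) and ($ii$).} Property ($i$) is immediate from Remark \ref{rem: ne basta una}: equivariance of $\mf{U}_\beta$ produces elements $g_2,\dots,g_k \in \mathcal{G}$ with $U_{i,\beta} = U_{1,\beta}\circ g_i$, and since each $g_i \in \mathcal{O}(N)$ fixes the origin the identity $U_{i,\beta}(0)=U_{1,\beta}(0)$ follows. For ($ii$) I would test minimality against $\bar{\mf{U}}$. Its components have pairwise disjoint supports (radial cones over the supports of the $\varphi_i$), so the interaction term vanishes on $\bar{\mf{U}}$ for every $\beta$. A spherical-coordinates computation yields
\[
\int_{B_1} |\nabla \bar U_i|^2 = \frac{1}{N + 2\ell - 2} \int_{\S^{N-1}} \bigl( \ell^2 \varphi_i^2 + |\nabla_\theta \varphi_i|^2 \bigr).
\]
By Remark \ref{rem: ne basta una} the components $\varphi_i$ are mutual rotations and therefore share the same Rayleigh quotient; Lemma \ref{lem: problem solved by minimizer} pins it down to $\ell(\ell+N-2)$. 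Combined with the normalization \eqref{normalization maggio}, summing over $i$ gives $\mathcal{E}_\beta(\bar{\mf{U}}) = \ell$, and the minimality of $\mf{U}_\beta$ completes ($ii$).

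\textbf{Property ($iii$).} Subharmonicity and the weak maximum principle give a uniform $L^\infty$-bound $\|U_{i,\beta}\|_{L^\infty(B_1)} \le \|\varphi_i\|_{L^\infty(\S^{N-1})}$. The $\beta$-uniform Lipschitz theory for competition systems developed in \cite{SoZi,NoTaTeVe} then furnishes a uniform $\Lip$-bound on $\{\mf{U}_\beta\}$, up to the boundary thanks to the smooth fixed trace $\varphi$. Ascoli-Arzelà plus the standard strong-$H^1_{\loc}$ compactness for segregating systems (see e.g.\ \cite{NoTaTeVe,SoZi,tt}) produces, along a subsequence, a Lipschitz limit $\mf{U}_\infty$ in $\mathcal{C}^{0,\alpha}(B_1)$ and in $H^1_{\loc}(B_1)$, with $U_{i,\infty}U_{j,\infty}\equiv 0$ for $i\neq j$. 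Non-triviality is forced by the persistence of the boundary trace in the limit: $\mf{U}_\infty|_{\partial B_1} = \varphi \not\equiv \mf{0}$.

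\textbf{Main obstacle.} The substantive step is ($ii$): the precise value $\ell$ for the energy of the test configuration is exactly what will later reconcile the variational construction with the growth-rate information supplied by the equivariant Alt-Caffarelli-Friedman monotonicity formula from Proposition \ref{prop: acf equiv}. The computation hinges on the symmetry-enforced equality of the Rayleigh quotients of the $\varphi_i$ (Remark \ref{rem: ne basta una}) and on the sharp eigenvalue identification of Lemma \ref{lem: problem solved by minimizer}. All remaining steps are either variational routine or direct appeals to the quoted asymptotic and regularity theory; the $\beta$-uniform Lipschitz estimate is the main technical black box imported from the literature.
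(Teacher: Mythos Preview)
Your proof is correct and follows essentially the same route as the paper: direct minimization of $\mathcal{E}_\beta$ over equivariant functions with Palais' principle for existence, the $\ell$-homogeneous extension of $\varphi$ as competitor for the bound in ($ii$), and the uniform $L^\infty$ bound via subharmonicity feeding into the Lipschitz compactness theory for ($iii$). The only cosmetic difference is in the computation of $\mathcal{E}_\beta(\bar{\mf{U}})$: the paper exploits that $\bar U_i$ is harmonic in its positivity set and applies the Euler identity $\partial_\nu \bar U_i = \ell \bar U_i$ on $\partial B_1$ to integrate by parts, whereas you compute directly in spherical coordinates; both arrive at $\ell$ using the same eigenvalue identification from Lemma~\ref{lem: problem solved by minimizer}.
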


\begin{proof}
It is not difficult to check that the functional $\mathcal{E}_\beta$ admits a minimizer $\mf{U}_\beta$ in the $H^1$-weakly closed set of the $(\mathcal{G},h)$-equivariant functions in $H^1(B_1,\R^k)$ with the prescribed boundary conditions. The fact that such minimizer solves the Euler-Lagrange equation is a consequence of Palais' principle of symmetric criticality. Property ($i$) follows straightforwardly by the equivariance (recall Remark \ref{rem: ne basta una}). Concerning property ($ii$), we introduce the $\ell$-homogeneous extension of $\varphi$, defined by 
\[
\phi(x):= |x|^\ell \varphi\left(\frac{x}{|x|}\right).
\]
By minimality $\mathcal{E}_\beta(\mf{U}_\beta) \le \mathcal{E}_\beta(\phi)$, so that it remains to check that $\mathcal{E}_\beta(\phi) \le \ell$. At first, since $\varphi_i$ is an eigenfunction of $-\Delta_\theta$ on $\{\varphi_i>0\}$ associated to the eigenvalue $\ell(\ell+N-2)$, the function $\phi_i$ is harmonic in $\{\phi_i>0\}$. Furthermore, by definition,
\[
\sum_i \int_{\pa B_1 } \phi_i^2 =1
\]
for every $i$. Therefore, using the Euler formula for homogeneous functions, we deduce that 
\begin{align*}
\mathcal{E}_\beta(\phi) &= \sum_i \int_{B_1} |\nabla \phi_i|^2 = \sum_i \int_{\{\phi_i>0\} \cap B_1} |\nabla \phi_i|^2  \\
& = \sum_i \int_{\pa B_1 \cap \{\phi_i>0\}} \phi_i \pa_{\nu} \phi_i = \ell \sum_i \int_{\pa B_1 \cap \{\phi_i>0\}} \phi_i^2 = \ell.
\end{align*}
It remains to prove ($iii$). By ($ii$) and the boundary conditions, the sequence $\{\mf{U}_\beta\}$ is bounded in $H^1(B_1)$, and hence it converges weakly to some limit $\mf{U}_\infty$. By compactness of the trace operator, $\mf{U}_\infty \not \equiv \mf{0}$. All the functions $\mf{U}_\beta$ are nonnegative, subharmonic and have the same boundary conditions, and hence by the maximum principle they are uniformly bounded in $L^\infty(B_1)$. This, as recalled in Section \ref{sec: prel}, implies the thesis.
\end{proof}


We plan to use the solutions of Lemma \ref{lem: existence} in order to construct entire solutions to \eqref{entire system}. Our method is based on a simple blow up argument. For a positive radius $r_\beta$ to be determined, we introduce
\[
	V_{i,\beta}(x) := \beta^{1/2} r_\beta U_{i,\beta}(r_\beta x) .
\]
By definition, $\mf{V}_\beta$ solves
\[
	\Delta V_{i,\beta} = V_{i,\beta} \sum_{j\neq i} V_{j,\beta}^2 \qquad \text{in $B_{1/r_\beta}$}.
\]
A convenient choice of $r_\beta$ is suggested by the following statement.
\begin{lemma}\label{lem: choice of r existence}
For any fixed $\beta > 1$ there exists a unique $r_\beta > 0$ such that
\[
	\int_{\partial B_1} \sum_{i=1}^{k} V_{i,\beta}^2 = 1.
\]
Moreover $r_\beta \to 0$, and consequently $B_{1/r_\beta} \to \R^N$, in the sense that for any compact $K \subset \R^N$ it results $K \Subset B_{1/r_\beta}$ provided $\beta$ is sufficiently large.
\end{lemma}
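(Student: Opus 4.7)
The plan is to rewrite the normalization condition as a scalar equation in $r_\beta$ and then analyze it. From the definition $V_{i,\beta}(x) = \beta^{1/2} r_\beta U_{i,\beta}(r_\beta x)$ and the change of variables $y = r_\beta x$ on $\pa B_1$, one has
\[
\int_{\pa B_1} \sum_i V_{i,\beta}^2 = \beta r_\beta^2 \cdot \frac{1}{r_\beta^{N-1}} \int_{\pa B_{r_\beta}} \sum_i U_{i,\beta}^2 = \beta r_\beta^2 H(\mf{U}_\beta, r_\beta),
\]
so the problem reduces to solving $f_\beta(r) := \beta r^2 H(\mf{U}_\beta, r) = 1$ for $r = r_\beta \in (0, 1)$.

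For the existence and uniqueness of $r_\beta$, I would invoke the intermediate value theorem. The function $f_\beta$ is continuous on $(0, 1]$, with $f_\beta(r) = O(r^2) \to 0$ as $r \to 0^+$ (since $H(\mf{U}_\beta, \cdot)$ stays bounded near $0$ by continuity of $\mf{U}_\beta$, as $H(\mf{U}_\beta, r) \to \omega_{N-1} \sum_i U_{i,\beta}(0)^2$) and $f_\beta(1) = \beta \cdot H(\mf{U}_\beta, 1) = \beta > 1$ using the boundary condition $U_{i,\beta}|_{\pa B_1} = \varphi_i$ together with the normalization \eqref{normalization maggio}. Since $r \mapsto \beta r^2$ is strictly increasing and $H(\mf{U}_\beta, \cdot)$ is non-decreasing (see Section \ref{sec: prel}), $f_\beta$ is strictly increasing at every $r$ with $H(\mf{U}_\beta, r) > 0$, which in particular rules out any second preimage of $1$. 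This yields both existence and uniqueness of $r_\beta \in (0,1)$.

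The crucial step is $r_\beta \to 0$, and the plan is to extract a quantitative rate from the uniform energy bound $\mathcal{E}_\beta(\mf{U}_\beta) \le \ell$ in Lemma \ref{lem: existence}(ii) via the Almgren frequency function. Since $E(\mf{U}_\beta, 1) = \mathcal{E}_\beta(\mf{U}_\beta) \le \ell$ and $H(\mf{U}_\beta, 1) = 1$, one has $N(\mf{U}_\beta, 1) \le \ell$, and Almgren's monotonicity then gives $N(\mf{U}_\beta, r) \le \ell$ for every $r \in (0, 1]$. Using the derivative formula for $H$ recalled in Section \ref{sec: prel} together with the elementary bound $\frac{2\beta}{r^{N-1}} \int_{B_r} \sum_{i<j} U_{i,\beta}^2 U_{j,\beta}^2 \le \frac{2 E(\mf{U}_\beta, r)}{r}$, one obtains
\[
\frac{d}{dr}\log H(\mf{U}_\beta, r) \le \frac{4 N(\mf{U}_\beta, r)}{r} \le \frac{4\ell}{r},
\]
whence integration on $[r,1]$ combined with $H(\mf{U}_\beta, 1) = 1$ yields the polynomial lower bound $H(\mf{U}_\beta, r) \ge r^{4\ell}$ for every $r \in (0,1]$. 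Plugging this into $\beta r_\beta^2 H(\mf{U}_\beta, r_\beta) = 1$ gives
\[
r_\beta \le \beta^{-\frac{1}{2+4\ell}} \xrightarrow{\beta \to +\infty} 0,
\]
and the conclusion $B_{1/r_\beta} \supset K$ for any fixed compact $K \subset \R^N$ and $\beta$ large follows immediately.

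The main technical point is the lower bound $H(\mf{U}_\beta, r) \ge r^{4\ell}$: it is the mechanism that converts the qualitative energy bound $\mathcal{E}_\beta(\mf{U}_\beta) \le \ell$ into a quantitative decay rate for $r_\beta$; everything else reduces to the intermediate value theorem and standard properties of $H$, $E$, $N$ from the preliminaries.
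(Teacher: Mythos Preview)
Your proof is correct. The existence and uniqueness part coincides with the paper's (the paper invokes the strict monotonicity of $H(\mf{U}_\beta,\cdot)$ directly, but your variant via $r^2$ strictly increasing and $H$ non-decreasing works just as well). For the step $r_\beta \to 0$, however, you take a genuinely different route. The paper argues by contradiction through the compactness statement of Lemma~\ref{lem: existence}($iii$): if $r_\beta \ge \bar r>0$ along a subsequence, then using $\mf{U}_\beta \to \mf{U}_\infty$ in $\mathcal{C}^{0,\alpha}(B_1)$ and the fact that $H(\mf{U}_\infty,\bar r)>0$ one gets $1=\beta r_\beta^2 H(\mf{U}_\beta,r_\beta)\ge \beta \bar r^2 H(\mf{U}_\beta,\bar r)\to +\infty$, a contradiction. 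Your argument instead bypasses ($iii$) entirely and extracts the uniform polynomial lower bound $H(\mf{U}_\beta,r)\ge r^{4\ell}$ directly from Almgren monotonicity combined with the energy bound in Lemma~\ref{lem: existence}($ii$), which yields the explicit rate $r_\beta\le \beta^{-1/(2+4\ell)}$. This is more elementary, since it does not rely on the regularity theory behind ($iii$), and it is quantitative; the paper's approach is softer but reuses a convergence result that is needed later in the construction anyway.
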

\begin{proof}
We have to find $r_\beta>0$ such that $\beta r_\beta^2 H(\mf{U}_\beta,r_\beta)=1$. The strict monotonicity of $H(\mf{U}_\beta,\cdot)$ (see Section \ref{sec: prel}) implies the strict monotonicity of the continuous function $r \mapsto \beta r^2 H(\mf{U}_\beta, r)$. By regularity, for any $\beta$ fixed
\[
	\lim_{r \to 0}   \beta r^2 H(\mf{U}_\beta, r) = \lim_{r \to 0} \beta \frac{r^2}{r^{N-1}} \int_{\partial B_{r}} \sum_{i=1}^{k} U_{i,\beta}^2 = \beta \lim_{r \to 0} r^2  \cdot \sum_{i=1}^{k} U_{i,\beta}^2 (0) =0,
\]
and by the normalization \eqref{normalization maggio} it results $\beta  H(\mf{U}_\beta, 1) =\beta>1$. This proves existence and uniqueness of $r_\beta$. If by contradiction $r_\beta \ge \bar r>0$, then by Lemma \ref{lem: existence}-($iii$) and by the monotonicity of $H(\mf{U}_\beta,\cdot)$ we would have
\[
1 = \beta r_\beta^2 H(\mf{U}_\beta,r_\beta)  \ge \beta \bar r^2 H(\mf{U}_\beta,\bar r) \ge \frac{\beta   \bar r^2}{2} \frac{1}{\bar r^{N-1}} \int_{\partial B_{\bar r}} \sum_{i=1}^{k} U_{i,\infty} \ge   \beta C,  
\]
which gives a contradiction for $\beta>1/C$. In order to bound from below the second to last term, we recall that since $\mf{0} \not \equiv \mf{U}_\infty$, we have $H(\mf{U}_\infty,r) \neq 0$ for all $0<r<1$ (see Section \ref{sec: prel}).
\end{proof}

\begin{lemma}
Up to a subsequence, $\mf{V}_\beta \to \mf{V}$ in $\C^2_\loc(\R^N)$, and $\mf{V}$ is an entire $(\mathcal{G},h)$-equivariant solution of \eqref{entire system} with $N(\mf{V},r) \le \ell$ for every $r>0$.
\end{lemma}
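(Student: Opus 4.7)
The plan is to derive a uniform bound on the Almgren frequency of the rescaled sequence $\mf{V}_\beta$, convert it to a uniform local $L^\infty$ bound via subharmonicity, and then pass to the limit by standard elliptic regularity. The main obstacle is controlling $\mf{V}_\beta$ uniformly on arbitrary compacts of $\R^N$; the key will be the identity $N(\mf{V}_\beta,r)=N(\mf{U}_\beta,r_\beta r)$, which transfers the $r=1$ Almgren bound on $\mf{U}_\beta$ (where the boundary data are fixed) to every scale of the rescaled sequence.

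More precisely, the scaling $V_{i,\beta}(x)=\beta^{1/2}r_\beta U_{i,\beta}(r_\beta x)$ yields $N(\mf{V}_\beta,r)=N(\mf{U}_\beta,r_\beta r)$ by a direct change of variables. Since $H(\mf{U}_\beta,1)=\sum_i\int_{\pa B_1}\varphi_i^2=1$ by the normalization \eqref{normalization maggio} and $E(\mf{U}_\beta,1)=\mathcal{E}_\beta(\mf{U}_\beta)\le\ell$ by Lemma \ref{lem: existence}($ii$), monotonicity of $N(\mf{U}_\beta,\cdot)$ yields
\[
N(\mf{V}_\beta,r)\le N(\mf{U}_\beta,1)\le\ell \qquad \text{for every } r\in(0,1/r_\beta].
\]
Combined with the normalization $H(\mf{V}_\beta,1)=1$ from Lemma \ref{lem: choice of r existence} and the monotonicity of $H$, this should produce a uniform polynomial bound $H(\mf{V}_\beta,r)\le C(1+r^{2\ell})$: for $r\le 1$, monotonicity alone gives $H\le 1$, while for $r\ge 1$ I would integrate the identity $H'/H=2N/r+\tfrac{2}{r^{N-1}H}\int_{B_r}\sum_{i<j}V_i^2V_j^2$, controlling the interaction summand via \eqref{integrability del resto} and $N\le\ell$. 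Since each $V_{i,\beta}$ is nonnegative and subharmonic, the mean value inequality then converts this spherical $L^2$ bound into a uniform $L^\infty$ bound on any fixed compact $K\subset\R^N$ (valid as soon as $\beta$ is large enough that $K$ lies well inside $B_{1/r_\beta}$, as ensured by Lemma \ref{lem: choice of r existence}).

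With uniform $L^\infty$ bounds on compacts, the right-hand side of $\Delta V_{i,\beta}=V_{i,\beta}\sum_{j\neq i}V_{j,\beta}^2$ is locally uniformly bounded, and Calder\'on--Zygmund together with Schauder estimates give uniform $\C^{2,\alpha}_\loc$ bounds on $\mf{V}_\beta$. A diagonal argument then extracts a subsequence $\mf{V}_\beta\to\mf{V}$ in $\C^2_\loc(\R^N)$; the limit satisfies \eqref{entire system} pointwise on $\R^N$ and inherits $(\mathcal{G},h)$-equivariance, since this is a closed condition under locally uniform convergence. The identity $H(\mf{V},1)=\lim H(\mf{V}_\beta,1)=1$ rules out triviality, and the strong maximum principle applied componentwise (using $V_i\ge 0$, $\Delta V_i\ge 0$) forces $V_i>0$ on $\R^N$ for each $i$. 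Finally, $\C^2_\loc$-convergence gives $N(\mf{V}_\beta,r)\to N(\mf{V},r)$ at each fixed $r>0$, and since $r<1/r_\beta$ eventually, the bound $N(\mf{V},r)\le\ell$ follows by passage to the limit.
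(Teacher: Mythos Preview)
Your argument is correct and follows essentially the same route as the paper: the scaling identity $N(\mf{V}_\beta,r)=N(\mf{U}_\beta,r_\beta r)$ combined with $N(\mf{U}_\beta,1)\le\ell$, integration of $H'/H$ using \eqref{integrability del resto} to get the polynomial bound on $H(\mf{V}_\beta,\cdot)$, subharmonicity for local $L^\infty$ control, and elliptic estimates plus a diagonal extraction. One small correction: in the positivity step, subharmonicity alone ($\Delta V_i\ge 0$, $V_i\ge 0$) does \emph{not} force $V_i>0$ (think of $|x|^2$); you should instead invoke the strong maximum principle for the linear equation $\Delta V_i - c(x)V_i=0$ with $c=\sum_{j\ne i}V_j^2\ge 0$, together with the fact that equivariance and $H(\mf{V},1)=1$ make every component nontrivial.
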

\begin{proof}
Since $\mathcal{E}_\beta(\mf{U}_\beta) \le \ell$ and $H(\mf{U}_\beta,1) =1$, by scaling and using the monotonicity of the Almgren quotient we have
\begin{equation}\label{stima almgren maggio}
N(\mf{V}_\beta, r) \le N\left(\mf{V}_\beta, \frac{1}{r_\beta}\right) = N(\mf{U}_\beta,1) \le \frac{\mathcal{E}(\mf{U}_\beta)}{H(\mf{U}_\beta,1)} \le \ell
\end{equation}
for every $0<r<1/r_\beta$, $\beta>0$. Let now $r>0$; then for $\beta$ sufficiently large 
\begin{align*}
	\frac{\de}{\de r} \log H(\mf{V}_\beta, r) &= \frac{2}{r} N_\beta(\mbfv_\beta,r) + \frac{2}{r^{N-1}H(\mf{V}_\beta,r)} \int_{B_r} \sum_{i<j} V_i^2 V_j^2\\  &\leq \frac{2\ell}{r} + \frac{2}{r^{N-1}H(\mf{V}_\beta,r)} \int_{B_r} \sum_{i<j} V_i^2 V_j^2.
	\end{align*}
Integrating the inequality with $r \in (1,R)$, and recalling \eqref{integrability del resto}, we infer that
\begin{equation}\label{doubling}
 \frac{H(\mf{V}_\beta, R)}{R^{2\ell}} \leq H(\mf{V}_\beta,1) e^\ell= e^\ell \quad \forall R \geq 1,
\end{equation}
independently of $\beta$. By subharmonicity and standard elliptic estimates, we deduce that $\mf{V}_\beta$ converges in $\C^2(B_R)$ to some limit $\mf{V}^R$, and since $R$ has been arbitrarily chosen, a diagonal selection gives convergence to an entire limit $\mf{V}$, which is clearly $(\mathcal{G},h)$-equivariant. Since $\mf{V}$ solves \eqref{entire system} and
\[
	\int_{\partial B_1} \sum_{i=1}^{k} V_{i,\beta}^2 = 1 \quad \text{and $V_{i,\beta}(0) = V_{j,\beta}(0)$ for all $i, j$}
\]
(see Lemmas \ref{lem: existence} and \ref{lem: choice of r existence}), all the components of $\mf{V}$ are nontrivial, and hence non-constant. 
\end{proof}

We now show that the growth rate of the solution is exactly equal to $\ell$. In light of the upper bound on the Almgren quotient proved in the previous lemma, this is a consequence of Theorem \ref{prop: Liouville symm}, which we prove below.

\begin{proof}[Proof of Theorem \ref{prop: Liouville symm}]
Let us assume by contradiction that there exists a $(\mathcal{G},h)$-equivariant solution $\mathbf{V}$ with growth rate less than $\ell - \eps$ for some $\eps > 0$. By monotonicity it results $N(\mf{V},r) \leq N(\mf{V},+\infty) \leq \ell -\eps$ for every $r>0$. We consider the blow-down sequence
\[
	\mf{V}_R(x) = \frac{1}{\sqrt{H(\mf{V}, R)}} \mf{V}(Rx).
\]
By Theorem 1.4 in \cite{SoTe}, it converges in $\C^{0,\alpha}_{\loc}(\R^N)$ to a limit $\mf{W}$, which is segregated, nonnegative, homogeneous with homogeneity degree $\delta := N(\mf{V},+\infty) \leq \ell -\eps$, and such that $\Delta W_i =0$ in $\{W_i>0\}$. The uniform convergence entails the $(\mathcal{G},h)$-equivariance, and hence the trace $\hat{\mf{w}}$ of $\mf{W}$ on the sphere $\S^{N-1}$ is an admissible competitor for $\ell$, in the sense that $\ell \le I_\infty(\hat{\mf{w}})$ ($I_\infty$ is defined in Lemma \ref{lem: problemi di minimo}). The value $I_\infty(\hat{\mf{w}})$ can be computed explicitly: indeed, by harmonicity, homogeneity and symmetry, $\hat w_i$ is an eigenfunction of the Laplace-Beltrami operator $-\Delta_\theta$ on a subdomain of $\S^{N-1}$, associated to the eigenvalue $\delta(\delta+N-2)$. This, by definition, implies that $I_\infty(\hat{\mf{w}}) = \delta < \ell$, in contradiction with the minimality of $\ell$.
\end{proof}

So far we proved the existence of a $(\mathcal{G},h)$-equivariant solution having growth rate $\ell$ in the weak sense of \eqref{growth rate}. It remains to show that the stronger condition \eqref{growth rate strong} holds. Before, we make the following remark.

\begin{remark}\label{rem: differenza noi loro}
Both Theorem \ref{thm: new existence} and \cite[Theorem 1.6]{BeTeWaWe} are based upon the same two-steps procedure: construction of solutions in balls $B_R$ of increasing radius, and passage to the limit as $R \to +\infty$. The main difference stays in the fact that while in \cite{BeTeWaWe} the authors prescribed the value of the functions on the boundary $\pa B_R$, we prescribed the value on $\pa B_1$, conveniently choosing $r_\beta$. This permits to simplify very much the proof of the convergence, since by the doubling property \eqref{doubling}, the normalization on $\pa B_1$ is enough to have $\mathcal{C}^2_{\loc}(\R^N)$ convergence of our approximating sequence. In \cite[page 123]{BeTeWaWe}, such compactness is proved in a different way, using fine tools such as Proposition 5.7 therein, which seems difficult to generalize in higher dimension.
\end{remark}

\begin{lemma}
It holds
\[
	\lim_{r \to \infty}\frac{1}{r^{2\ell}} H(\mf{V},r) \in (0,+\infty).
\]
\end{lemma}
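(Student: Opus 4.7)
My plan is to combine almost-monotonicity of $g(R) := H(\mf{V},R)/R^{2\ell}$ with a lower bound on $J_1$ coming from Proposition \ref{prop: acf equiv}. The first step is to show that $g(R)$ admits a limit in $[0, e^\ell]$: integrating the identity
$\tfrac{d}{dr}\log H(\mf{V},r) = \tfrac{2 N(\mf{V},r)}{r} + \tfrac{2}{r^{N-1}H(\mf{V},r)} \int_{B_r} \sum_{i<j} V_i^2 V_j^2$
on $[1,R]$ and using $N(\mf{V},r) \le \ell$ (which is \eqref{stima almgren maggio} passed to the limit $\beta \to \infty$), we split $\log g(R) - \log g(1)$ into a monotone non-increasing piece (coming from the non-positive gap $2(N(\mf{V},s)-\ell)/s$) and a monotone non-decreasing piece (the competition integral), the latter bounded by $\ell$ via \eqref{integrability del resto}. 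So $g(R)$ converges to some $g_\infty \in [0, e^\ell]$, the upper bound being exactly \eqref{doubling}. It therefore remains to rule out $g_\infty = 0$.

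For this, I would invoke the Alt-Caffarelli-Friedman monotonicity. By Remark \ref{rem: ne basta una}, equivariance gives $V_i = V_1 \circ g_i$ for orthogonal transformations $g_i \in \mathcal{G}$, and since the weight $|x|^{2-N}$ is rotation-invariant, a change of variables shows $J_1(r) = \cdots = J_k(r)$. Taking the $k$-th root of the monotone quantity in Proposition \ref{prop: acf equiv}, the map $r \mapsto r^{-2\ell} e^{-Cr^{-1/2}/k} J_1(r)$ is non-decreasing on $(1,\infty)$; since $\mf{V} \not\equiv \mf{0}$ this quantity is strictly positive at some $r_0 > 1$, yielding $J_1(r) \ge c_1 r^{2\ell}$ for every $r \ge r_0$. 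I would then convert this into a lower bound on $H$ through an exact integration-by-parts identity. Setting $G_1(s) := \int_{B_s}(|\nabla V_1|^2 + V_1^2 \sum_{j \neq 1} V_j^2)$ and using the equation for $V_1$ to get $G_1(s) = \int_{\partial B_s} V_1 \partial_\nu V_1 = \tfrac{s^{N-1}}{2} H_1'(s)$ with $H_1 := H(\mf{V},\cdot)/k$, writing $J_1(R) = \int_0^R s^{2-N} G_1'(s)\,ds$ and integrating by parts gives
\[
k J_1(R) = \Bigl[ N(\mf{V},R) + \tfrac{N-2}{2} \Bigr] H(\mf{V},R) + \frac{1}{R^{N-2}} \int_{B_R} \sum_{i<j} V_i^2 V_j^2 - \tfrac{N-2}{2} H(\mf{V},0).
\]
Combining $N(\mf{V},R) \le \ell$ with $R^{2-N} \int_{B_R} \sum_{i<j} V_i^2 V_j^2 \le E(\mf{V},R) \le \ell H(\mf{V},R)$ and $H(\mf{V},0) \ge 0$ yields $k J_1(R) \le [2\ell + (N-2)/2]\, H(\mf{V},R)$, which together with $J_1(R) \ge c_1 R^{2\ell}$ produces $\liminf_{R \to \infty} g(R) > 0$, hence $g_\infty \in (0, \infty)$.

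The main obstacle is the integration-by-parts computation: one must verify that the boundary contribution $G_1(s)/s^{N-2} \to 0$ as $s \to 0$ (which follows from the regularity of $\mf{V}$ near the origin, since $G_1(s) = O(s^N)$), and separately handle the case $N = 2$, where the weight $|x|^{2-N}$ is trivial and the identity simplifies to $k J_1(R) = N(\mf{V},R) H(\mf{V},R) + \int_{B_R} \sum_{i<j} V_i^2 V_j^2$, with the same conclusion. The conceptual point is the dichotomy of the first step: since $g_\infty$ already exists in $[0, e^\ell]$, only positivity remains to be ruled out, and the ACF formula combined with the algebraic identity between $J_1$ and $H$ delivers exactly that.
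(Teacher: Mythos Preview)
Your argument is correct and follows the same overall strategy as the paper: establish that $g(r)=H(\mf{V},r)/r^{2\ell}$ has a limit, and use the equivariant Alt--Caffarelli--Friedman formula (Proposition~\ref{prop: acf equiv}) to rule out that this limit is zero. The essential difference lies in how the lower bound on the $J_i$ is converted into a lower bound on $H$. The paper passes through $E$: it writes $L \ge \ell^{-1}\liminf E/r^{2\ell}$, and then invokes an inequality from \cite{SoZi2} of the form $E(\mf{V},r)+H(\mf{V},r) \ge C\,(J_1(r)\cdots J_k(r))^{1/k}$, after which the ACF formula finishes. You instead exploit equivariance to reduce to a single $J_1$, and then derive the exact identity
\[
k\,J_1(R) = \Bigl[N(\mf{V},R)+\tfrac{N-2}{2}\Bigr]H(\mf{V},R) + \frac{1}{R^{N-2}}\int_{B_R}\sum_{i<j}V_i^2V_j^2 - \tfrac{N-2}{2}H(\mf{V},0)
\]
by integration by parts, which bounds $J_1$ directly by a multiple of $H$. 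This is more self-contained (no external citation is needed for the conversion), and it makes the constant explicit. Your treatment of the existence of the limit is also slightly more careful than the paper's: the paper writes $\frac{d}{dr}\log(H/r^{2\ell}) = \frac{2}{r}(N-\ell) \le 0$, which drops the positive competition term in $H'$; your splitting into a non-increasing piece and a bounded non-decreasing piece (controlled by \eqref{integrability del resto}) is the correct way to obtain convergence in $[0,e^\ell]$, matching \eqref{doubling}.
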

\begin{proof}
It is easy to prove that the limit exists and it is less than 1. Indeed
\[
	\frac{d}{dr} \log \frac{H(\mf{V},r)}{r^{2\ell}} = \frac{H'(\mf{V},r)}{H(\mf{V},r)} - \frac{2\ell}{r} = \frac{2}{r} (N(\mf{V},r) - \ell) \leq 0,
\]
and by construction $H(\mf{V},1) = 1$. Letting
\[
	L = \lim_{r \to \infty}\frac{H(\mf{V},r)}{r^{2\ell}} 
\]
we are left to show that $L >0$. 
Recalling that $N(\mf{V},+\infty)= \ell$, we have
\[
	L   = \lim_{r \to \infty} \left( \frac{ E(\mf{V},r) }{r^{2\ell}}\right) \cdot \lim_{r \to +\infty} \frac{H(\mf{V},r)}{ E(\mf{V},r)   } 
	\ge \frac{1}{\ell} \liminf_{r \to \infty}\frac{E(\mf{V},r)}{r^{2\ell}},
\]
and the thesis follows if 
\[
	\liminf_{r \to \infty}\frac{ E(\mf{V},r) + H(\mf{V},r)}{r^{2\ell}} > 0.
\]
To this aim, we note that with computations analogue to those in \cite[Conclusion of the proof of Theorem 1.5]{SoZi2} we can prove that
\[
	\frac{ E(\mf{V},r) + H(\mf{V},r)}{r^{2\ell}} \geq \frac{C}{r^{2\ell}} \left( J_1(r) \dots J_k(r) \right)^{1/k} = C\left(  \frac{1}{r^{2\ell k}}J_1(r) \dots J_k(r) \right)^{1/k},
\]
where the integrals $J_i$ are evaluated for the function $\mf{V}$. Since $\mf{V}$ is a $(\mathcal{G},h)$-equivariant solution of \eqref{entire system}, we are in position to apply the Alt-Caffarelli-Friedman monotonicity formula of Proposition \ref{prop: acf equiv}, whence
\[
	\frac{  E(\mf{V},r) + H(\mf{V},r)}{r^{2\ell}} \geq C \left(  J_1(1) \dots J_k(1) \right)^{1/k}e^{Cr^{-1/2}} \ge C e^{Cr^{-1/2}}  \]
	for every $r>1$.
\end{proof}


\end{document}